\newtheorem{definition}{Definition}[section]
\newtheorem{remark}{Remark}[section]
\newtheorem{theorem}{Theorem}[section]
\newtheorem{lemma}{Lemma}[section]
\newtheorem{corollary}{Corollary}[section]
\numberwithin{equation}{section}
\numberwithin{equation}{section}
\begin{document}
\title[Obata theorem for the weighted Kohn Laplacian]{On the Obata theorem
for the weighted Kohn Laplacian in a closed weighted Sasakian manifold}
\author{$^{\ast }$Chin-Tung Wu}
\address{Department of applied Mathematics, National Pingtung University,
Pingtung 90003, Taiwan}
\email{ctwu@mail.nptu.edu.tw}
\thanks{$^{\ast }$Research supported in part by the MOST of Taiwan}

\begin{abstract}
In this paper, we generalize the CR Obata theorem for the Kohn Laplacian to
a closed strictly pseudoconvex CR manifold with a weighted volume measure.
More precisely, we first derive the weighted CR Reilly's formula associated
with the weighted Kohn Laplacian and obtain the corresponding first
eigenvalue estimate. With its application, we obtain the CR\ Obata theorem
in a closed weighted Sasakian manifold.
\end{abstract}

\subjclass{Primary 32V05, 32V20; Secondary 53C56.}
\keywords{Pseudohermitian manifold, first eigenvalue, weighted CR Reilly
formula, Bakry-Emery pseudohermitian Ricci curvature, Kohn Laplacian.}
\maketitle

\section{Introduction}

Let $(M,J,\theta ,d\sigma )$ be a strictly pseudoconvex CR $(2n+1)$-manifold
with a weighted volume measure $d\sigma =e^{-\phi (x)}\theta \wedge \left(
d\theta \right) ^{n}$ for a given real smooth weighted function $\phi $. In 
\cite{ccw2}, Chang-Chen-Wu derive the weighted CR Reilly formula associated
with the Witten sub-Laplacian $\Delta _{b}-\langle \nabla _{b}\cdot ,\nabla
_{b}\phi \rangle $ in a compact weighted strictly pseudoconvex CR $(2n+1)$%
-manifold, here $\Delta _{b}$ is the sub-Laplacian and $\nabla _{b}$ is the
subgradient. Then they can obtain the first eigenvalue estimate for the
Witten sub-Laplacian and the corresponding CR\ Obata theorem in a compact
weighted Sasakian manifold.

In this paper, we generalize the result to the weighted Kohn Laplacian. More
precisely, we first derive the weighted CR Reilly formula associated with
the weighted Kohn Laplacian%
\begin{equation*}
\begin{array}{c}
\square _{b}^{\phi }=\square _{b}+2\langle \overline{\partial }_{b}\cdot ,%
\overline{\partial }_{b}\phi \rangle%
\end{array}%
\end{equation*}%
on a closed strictly pseudoconvex CR $(2n+1)$-manifold. Here $\square _{b}$
is the Kohn Laplacian and $\overline{\partial }_{b}$ is the conjugate of the
Cauchy-Riemann operator $\partial _{b}$ as in section $2$. Secondly, we
obtain the corresponding first eigenvalue estimate for the weighted Kohn
Laplacian on a closed strictly pseudoconvex CR $(2n+1)$-manifold. With its
application, we obtain the CR Obata theorem in a closed weighted Sasakian
manifold which is served as a generalization of results in \cite{wu} and 
\cite{lsw}. Note that the weighted Kohn Laplacian $\square _{b}^{\phi }$
satisfies the integration by parts equation%
\begin{equation*}
\begin{array}{c}
\int_{M}\langle \square _{b}^{\phi }f,g\rangle d\sigma =2\int_{M}\langle 
\overline{\partial }_{b}f,\overline{\partial }_{b}g\rangle d\sigma
=\int_{M}\langle f,\square _{b}^{\phi }g\rangle d\sigma ,%
\end{array}%
\end{equation*}%
for all smooth functions $f,g$ on $M$.

As in \cite{cckl} the ($\infty $-dimensional) Bakry-Emery pseudohermitian
Ricci curvature $Ric(\square _{b}^{\phi })$ and the corresponding torsion $%
Tor(\square _{b}^{\phi })$ are defined by 
\begin{equation}
\begin{array}{l}
Ric(\square _{b}^{\phi })(W,W)=R_{\alpha \overline{\beta }}W^{\alpha }W^{%
\overline{\beta }}+2\func{Re}(\phi _{\alpha \overline{\beta }}W^{\alpha }W^{%
\overline{\beta }}), \\ 
Tor(\square _{b}^{\phi })(W,W)=2\limfunc{Re}(\phi _{\overline{\alpha }%
\overline{\beta }}W^{\overline{\alpha }}W^{\overline{\beta }}),%
\end{array}
\label{A}
\end{equation}%
for all $W=W^{\alpha }Z_{\alpha }+W^{\overline{\alpha }}Z_{\overline{\alpha }%
}\in T^{1,0}(M)\oplus T^{0,1}(M).$

Now we recall the weighted CR Paneitz operator $P_{0}^{\phi }$ in Section $3$
as%
\begin{equation*}
\begin{array}{c}
P_{0}^{\phi }f=e^{\phi }[\delta _{b}(e^{-\phi }P^{\phi }f)+\overline{\delta }%
_{b}(e^{-\phi }\overline{P}^{\phi }f)],%
\end{array}%
\end{equation*}%
here $P^{\phi }f=\sum_{\beta =1}^{n}(P_{\beta }^{\phi }f)\theta ^{\beta }$
with 
\begin{equation*}
\begin{array}{c}
P_{\beta }^{\phi }f=P_{\beta }f-\partial _{Z_{\beta }}\left\langle \partial
_{b}f,\partial _{b}\phi \right\rangle +n\sqrt{-1}f_{0}\phi _{\beta }%
\end{array}%
\end{equation*}
and $\overline{P}^{\phi }f=\sum_{\beta =1}^{n}(\overline{P}_{\beta }^{\phi
}f)\theta ^{\overline{\beta }}$, the conjugate of $P^{\phi }$. In Lemma \ref%
{lemma}, we observe that%
\begin{equation*}
\begin{array}{l}
P_{0}^{\phi }f=\frac{1}{4}(\square _{b}^{\phi }\square _{b}^{\phi }+%
\overline{\square }_{b}^{\phi }\overline{\square }_{b}^{\phi
})f+n^{2}(f_{00}-\phi _{0}f_{0})-n\func{Re}Q^{\phi }f,%
\end{array}%
\end{equation*}%
for the second-order operator $Q^{\phi }f=2\sqrt{-1}e^{\phi }(e^{-\phi
}A^{\alpha \beta }f_{\alpha })_{,\beta }.$ This implies that $P_{0}^{\phi }$
is a real fourth-order self-adjoint operator on a closed weighted strictly
pseudoconvex CR $(2n+1)$-manifold.

\begin{definition}
Let $(M,J,\theta ,d\sigma )$ be a closed weighted strictly pseudoconvex CR $%
(2n+1)$-manifold. We say that the weighted CR Paneitz operator $P_{0}^{\phi
} $ is nonnegative if 
\begin{equation*}
\begin{array}{c}
\int_{M}\overline{f}P_{0}^{\phi }fd\sigma \geq 0%
\end{array}%
\end{equation*}%
for all smooth functions $f$.
\end{definition}

\begin{remark}
\label{r1} Let $(M,J,\theta ,d\sigma )$ be a closed weighted strictly
pseudoconvex CR $(2n+1)$-manifold of vanishing torsion and $\phi _{0}$
vanishes on $M$. It follows from (\ref{3a}) that the weighted Kohn Laplacian 
$\square _{b}^{\phi }+\sqrt{-2}nT$ and $\square _{b}^{\phi }-\sqrt{-2}nT$
commute and they are diagonalized simultaneously with 
\begin{equation*}
\begin{array}{c}
4P_{0}^{\phi }=\square _{b}^{\phi }\square _{b}^{\phi }+\overline{\square }%
_{b}^{\phi }\overline{\square }_{b}^{\phi }+4n^{2}T^{2}.%
\end{array}%
\end{equation*}%
Then the corresponding weighted CR Paneitz operator $P_{0}^{\phi }$ is
nonnegative.
\end{remark}

By using integrating by parts to the CR Bochner formula (\ref{11}) for $%
\square _{b}^{\phi }$ with respect to the given weighted volume measure $%
d\sigma $, we derive the following weighted CR Reilly formula.

\begin{theorem}
\label{Reilly'sformula} Let $(M,J,\theta ,d\sigma )$ be a closed weighted
strictly pseudoconvex CR $(2n+1)$-manifold. Then for any complex smooth
function $f$, we have%
\begin{equation}
\begin{array}{ll}
& \frac{n+1}{4n}\int_{M}\langle \square _{b}^{\phi }f,\square _{b}^{\phi
}f\rangle d\sigma  \\ 
= & \int_{M}\tsum_{\alpha ,\beta }|f_{\overline{\alpha }\overline{\beta }}-%
\frac{3}{4}f_{\overline{\alpha }}\phi _{\overline{\beta }}|^{2}d\sigma -%
\frac{1}{16}\int_{M}|\overline{\partial }_{b}f|^{2}[-12\Delta _{b}\phi +|%
\overline{\partial }_{b}\phi |^{2}]d\sigma  \\ 
& +\frac{1}{2n}\int_{M}\overline{f}P_{0}^{\phi }fd\sigma +\int_{M}[Ric+\frac{%
1}{4}Tor(\square _{b}^{\phi })]((\nabla _{b}f)_{\mathbb{C}},(\nabla _{b}f)_{%
\mathbb{C}})d\sigma  \\ 
& +\frac{1}{4n}\int_{M}\func{Re}\langle \square _{b}^{\phi }f,(2n-1)\langle 
\overline{\partial }_{b}f,\overline{\partial }_{b}\phi \rangle -(n-1)\langle
\partial _{b}f,\partial _{b}\phi \rangle \rangle d\sigma .%
\end{array}
\label{0}
\end{equation}
\end{theorem}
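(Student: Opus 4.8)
The plan is to start from the pointwise CR Bochner formula (\ref{11}) for the weighted Kohn Laplacian and integrate it against the weighted volume measure $d\sigma$. The basic mechanism is the weighted divergence theorem: on the closed manifold $M$ one has $\int_M \Delta_b^{\phi}g\,d\sigma=0$ for every smooth $g$, where $\Delta_b^{\phi}=\Delta_b-\langle\nabla_b\cdot,\nabla_b\phi\rangle$ is the Witten sub-Laplacian that serves as the weighted divergence for $d\sigma$. Applying this with $g=|\overline{\partial}_b f|^2$ annihilates the left-hand side of the Bochner identity, which is exactly what promotes the pointwise formula to a global integral identity. (Equivalently, integrating $\frac{1}{2}\Delta_b|\overline{\partial}_b f|^2$ against $e^{-\phi}\theta\wedge(d\theta)^n$ transfers the weight onto the first-order $\phi$-terms.)

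Next I would organize the surviving terms into three groups. The Bochner formula produces a nonnegative ``Hessian'' group built from $f_{\overline{\alpha}\overline{\beta}}$ and $f_{\overline{\alpha}\beta}$, a mixed group of the form $\func{Re}\langle\overline{\partial}_b f,\overline{\partial}_b\square_b^{\phi}f\rangle$ arising when $\square_b^{\phi}$ is commuted past the covariant derivatives, and the Bakry-Emery curvature-torsion group $[Ric+\tfrac14 Tor(\square_b^{\phi})]((\nabla_b f)_{\mathbb{C}},(\nabla_b f)_{\mathbb{C}})$ generated precisely by the commutators of $Z_{\alpha}$ with $Z_{\overline{\beta}}$ and by the torsion tensor $A^{\alpha\beta}$. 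Decomposing the mixed-index Hessian $f_{\overline{\alpha}\beta}$ into its trace part $\square_b^{\phi}f$ and a trace-free remainder is where the dimensional constant $\frac{n+1}{4n}$ on the left, and the coefficient $\frac{1}{2n}$ in front of the Paneitz term, are born, so I would track these constants with care at this stage.

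Then I would integrate by parts once more against $d\sigma$ on the mixed term $\int_M\func{Re}\langle\overline{\partial}_b f,\overline{\partial}_b\square_b^{\phi}f\rangle\,d\sigma$, each such step spawning additional $\phi$-derivatives from the weight. Moving the derivatives off $\square_b^{\phi}f$ and re-expressing the resulting fourth-order combination through the identity of \lemref{lemma}, namely $P_0^{\phi}f=\tfrac14(\square_b^{\phi}\square_b^{\phi}+\overline{\square}_b^{\phi}\overline{\square}_b^{\phi})f+n^2(f_{00}-\phi_0 f_0)-n\func{Re}Q^{\phi}f$, is what produces the self-adjoint Paneitz contribution $\int_M\overline{f}P_0^{\phi}f\,d\sigma$, while the $A^{\alpha\beta}$-pieces hidden in $Q^{\phi}f$ recombine with the commutator torsion into the stated $\tfrac14 Tor(\square_b^{\phi})$. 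Finally, completing the square on the Hessian group, by pairing the pure term $f_{\overline{\alpha}\overline{\beta}}$ with the first-order correction $\tfrac34 f_{\overline{\alpha}}\phi_{\overline{\beta}}$, yields $|f_{\overline{\alpha}\overline{\beta}}-\tfrac34 f_{\overline{\alpha}}\phi_{\overline{\beta}}|^2$, and the leftover cross terms together with the $\phi$-quadratic remainders must collapse into $-\frac{1}{16}\int_M|\overline{\partial}_b f|^2[-12\Delta_b\phi+|\overline{\partial}_b\phi|^2]\,d\sigma$.

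I expect the main obstacle to be the exact bookkeeping of the $\phi$-dependent lower-order terms rather than the curvature terms, which follow routinely from the standard CR commutation relations. Because $d\sigma$ carries the weight $e^{-\phi}$, every integration by parts and every reordering of covariant derivatives releases extra first- and second-order $\phi$-contributions, and the genuinely delicate point is to verify that after all cancellations these assemble into precisely the two displayed weighted quantities, the completed square with the coefficient $\tfrac34$ and the bracket $-12\Delta_b\phi+|\overline{\partial}_b\phi|^2$ with prefactor $\frac{1}{16}$. Confirming that no residual torsion term survives outside of $\tfrac14 Tor(\square_b^{\phi})$, and that the $Q^{\phi}$ terms cancel against the commutator torsion exactly, will be the technical heart of the argument.
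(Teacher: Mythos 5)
Your high-level skeleton coincides with the paper's: integrate the weighted Bochner formula (\ref{9}) against $d\sigma$ (the left-hand side dies since $\int_{M}\square_{b}^{\phi}g\,d\sigma=0$), split the mixed Hessian $f_{\overline{\alpha}\beta}$ into trace and trace-free parts, and complete the square; your accounting of $\frac{n+1}{4n}$ is also essentially correct (a first pass gives $\frac{n+2}{4n}$, and the trace part returns $\frac{1}{4n}\langle\square_{b}^{\phi}f,\square_{b}^{\phi}f\rangle$, cf. (\ref{11}), (\ref{12}), (\ref{14})). However, the mechanism you designate as producing the Paneitz term is wrong, and following it would derail the computation. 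The mixed terms $\langle\overline{\partial}_{b}f,\overline{\partial}_{b}\square_{b}^{\phi}f\rangle$ integrate \emph{directly} to $\frac{1}{2}\int_{M}\langle\square_{b}^{\phi}f,\square_{b}^{\phi}f\rangle d\sigma$ by the adjoint property of $\square_{b}^{\phi}$, so no fourth-order combination survives for Lemma \ref{lemma} to act on; and if you force the substitution $\frac{1}{4}(\square_{b}^{\phi}\square_{b}^{\phi}+\overline{\square}_{b}^{\phi}\overline{\square}_{b}^{\phi})f=P_{0}^{\phi}f-n^{2}(f_{00}-\phi_{0}f_{0})+n\func{Re}Q^{\phi}f$, you import $\int_{M}|f_{0}|^{2}d\sigma$, $\int_{M}|\overline{\square}_{b}^{\phi}f|^{2}d\sigma$ and $A^{\alpha\beta}$-torsion integrals, none of which occur in (\ref{0}), and your plan supplies nothing to cancel them. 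In the paper the Paneitz contribution enters through the divergence structure of $P_{0}^{\phi}$ itself, in two installments: (i) the terms $-\frac{1}{n}\langle\overline{P}^{\phi}f,\overline{\partial}_{b}f\rangle+\frac{n-1}{n}\langle P^{\phi}\overline{f},\partial_{b}\overline{f}\rangle$ already present in (\ref{9}) integrate by parts, via $P_{0}^{\phi}=e^{\phi}[\delta_{b}(e^{-\phi}P^{\phi}\cdot)+\overline{\delta}_{b}(e^{-\phi}\overline{P}^{\phi}\cdot)]$, into the $\frac{2-n}{2n}\int_{M}fP_{0}^{\phi}\overline{f}\,d\sigma$ term; and (ii) the trace-free part satisfies the divergence identity $(B_{\alpha\overline{\beta}}f)^{,\overline{\beta}}=\frac{n-1}{n}P_{\alpha}f$, which together with relation (\ref{4a}) yields the further $\frac{n-1}{2n}\int_{M}fP_{0}^{\phi}\overline{f}\,d\sigma$ in (\ref{14}); the sum is the stated $\frac{1}{2n}$. (Your second paragraph gestures at (ii) while your third paragraph attributes the term to Lemma \ref{lemma}, so the proposal is also internally inconsistent on this point.)

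The second genuine gap is your torsion bookkeeping, which rests on a misreading of the notation: by (\ref{A}), $Tor(\square_{b}^{\phi})(W,W)=2\func{Re}(\phi_{\overline{\alpha}\overline{\beta}}W^{\overline{\alpha}}W^{\overline{\beta}})$ is built from the second derivatives of the weight $\phi$, not from the pseudohermitian torsion $A_{\alpha\beta}$. A structural feature of the Kohn-Laplacian Reilly formula is that $A_{\alpha\beta}$ never appears explicitly: it stays absorbed inside $P^{\phi}$ (through the $\sqrt{-1}nA_{\beta\gamma}f^{\gamma}$ term) and hence inside $P_{0}^{\phi}$, and $Q^{\phi}$ plays no role whatsoever in the proof of (\ref{0}). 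Consequently there is no "cancellation of the $Q^{\phi}$ terms against the commutator torsion" to verify; the actual technical heart is assembling the $\phi_{\alpha\beta}$, $\phi_{\overline{\alpha}\overline{\beta}}$ and first-order $\phi$ cross terms released by the weighted integrations by parts (\ref{10})--(\ref{10b}) and by the trace-free divergence computation into the $\frac{1}{4}Tor(\square_{b}^{\phi})$ piece, the completed square with coefficient $\frac{3}{4}$, and the bracket $-12\Delta_{b}\phi+|\overline{\partial}_{b}\phi|^{2}$ — precisely the bookkeeping your plan leaves open.
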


With its applications, we derive the first eigenvalue estimate of the
weighted Kohn Laplacian $\square _{b}^{\phi }$ and the CR\ Obata theorem in
a closed weighted strictly pseudoconvex CR $(2n+1)$-manifold.

\begin{theorem}
\label{TB} Let $(M,J,\theta ,d\sigma )$ be a closed weighted strictly
pseudoconvex CR $(2n+1)$-manifold with the nonnegative weighted CR Paneitz
operator $P_{0}^{\phi }$. Suppose that 
\begin{equation}
\begin{array}{c}
\lbrack Ric+\frac{1}{4}Tor(\square _{b}^{\phi })](Z,Z)\geq k\left\langle
Z,Z\right\rangle 
\end{array}
\label{B}
\end{equation}%
for all $Z\in T_{1,0}$ and a positive constant $k.$ Then the first
eigenvalue of the weighted Kohn Laplacian $\square _{b}^{\phi }$ satisfies
the lower bound 
\begin{equation}
\begin{array}{c}
\lambda _{1}\geq \frac{4n(k-l)}{n\omega +2n+2},%
\end{array}
\label{1}
\end{equation}%
where $\omega =\underset{M}{\mathrm{osc}}\phi =\underset{M}{\sup }\phi -%
\underset{M}{\inf }\phi ,$ and for nonnegative constant $l$ with $0\leq l<k$
such that%
\begin{equation}
\begin{array}{c}
-12\Delta _{b}\phi +|\overline{\partial }_{b}\phi |^{2}\leq 16l%
\end{array}
\label{1a}
\end{equation}%
on $M$. Moreover, if the equality (\ref{1}) holds, then $M$ is CR isometric
to a standard CR $(2n+1)$-sphere.
\end{theorem}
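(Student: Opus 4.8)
The plan is to test the weighted CR Reilly formula of \thmref{Reilly'sformula} against a first eigenfunction and then estimate the right-hand side of (\ref{0}) term by term. Let $f$ be a first eigenfunction, so that $\square_b^\phi f=\lambda_1 f$ with $\lambda_1>0$. Since the operator $\square_b^\phi=\square_b+2\langle\overline{\partial}_b\cdot,\overline{\partial}_b\phi\rangle$ depends on $\phi$ only through $\overline{\partial}_b\phi$, and the measure $d\sigma$ is merely rescaled by a constant under $\phi\mapsto\phi+c$, I would first normalize the weight (say, center $\phi$ so that its range has length $\omega$); this invariance is the device through which the oscillation $\omega$ will eventually enter. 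Using the integration-by-parts identity stated in the introduction, $\lambda_1\int_M|f|^2\,d\sigma=\int_M\langle\square_b^\phi f,f\rangle\,d\sigma=2\int_M|\overline{\partial}_b f|^2\,d\sigma$, so the left-hand side of (\ref{0}) becomes $\frac{n+1}{4n}\lambda_1^2\int_M|f|^2\,d\sigma=\frac{n+1}{2n}\lambda_1\int_M|\overline{\partial}_b f|^2\,d\sigma$, and every surviving term can be reduced to a multiple of $\int_M|\overline{\partial}_b f|^2\,d\sigma$.

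Next I would bound the right-hand side of (\ref{0}) from below. The first term $\int_M\sum_{\alpha,\beta}|f_{\overline{\alpha}\overline{\beta}}-\tfrac34 f_{\overline{\alpha}}\phi_{\overline{\beta}}|^2\,d\sigma$ is manifestly nonnegative and is discarded, as is the Paneitz term $\frac{1}{2n}\int_M\overline{f}P_0^\phi f\,d\sigma$, which is nonnegative by the standing hypothesis that $P_0^\phi$ is nonnegative. The weight term is controlled by (\ref{1a}): since $-12\Delta_b\phi+|\overline{\partial}_b\phi|^2\le 16l$, one has $-\frac{1}{16}\int_M|\overline{\partial}_b f|^2[-12\Delta_b\phi+|\overline{\partial}_b\phi|^2]\,d\sigma\ge -l\int_M|\overline{\partial}_b f|^2\,d\sigma$. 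The curvature term is bounded below using (\ref{B}): $\int_M[Ric+\tfrac14 Tor(\square_b^\phi)]((\nabla_b f)_{\mathbb{C}},(\nabla_b f)_{\mathbb{C}})\,d\sigma\ge k\int_M\langle(\nabla_b f)_{\mathbb{C}},(\nabla_b f)_{\mathbb{C}}\rangle\,d\sigma$, which, with the convention for the complexified subgradient used in (\ref{0}), is $k\int_M|\overline{\partial}_b f|^2\,d\sigma$.

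The remaining cross term $\frac{1}{4n}\int_M\func{Re}\langle\square_b^\phi f,(2n-1)\langle\overline{\partial}_b f,\overline{\partial}_b\phi\rangle-(n-1)\langle\partial_b f,\partial_b\phi\rangle\rangle\,d\sigma$ is the main obstacle, and it is the only place where a derivative of $\phi$ survives. Substituting $\square_b^\phi f=\lambda_1 f$ turns it into $\frac{\lambda_1}{4n}\int_M\func{Re}\langle f,(2n-1)\langle\overline{\partial}_b f,\overline{\partial}_b\phi\rangle-(n-1)\langle\partial_b f,\partial_b\phi\rangle\rangle\,d\sigma$. I would then integrate by parts against the weighted measure to transfer the derivative off $\phi$, leaving an integrand in which $\phi$ appears undifferentiated alongside first-order terms in $f$. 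Bounding $\phi$ by its oscillation and applying the Cauchy--Schwarz and Young inequalities, the goal is the estimate $\bigl|\text{cross term}\bigr|\le\frac{\omega}{4}\lambda_1\int_M|\overline{\partial}_b f|^2\,d\sigma=\frac{n\omega}{4n}\lambda_1\int_M|\overline{\partial}_b f|^2\,d\sigma$. Producing precisely the coefficient $\frac{\omega}{4}$, rather than a cruder bound involving $\|\partial_b\phi\|_\infty$, is the delicate point and is exactly what contributes the summand $n\omega$ to the denominator of (\ref{1}).

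Collecting the four contributions, (\ref{0}) reduces to $\frac{n+1}{2n}\lambda_1\ge (k-l)-\frac{\omega}{4}\lambda_1$, that is $\frac{2n+2+n\omega}{4n}\lambda_1\ge k-l$, which rearranges to the lower bound (\ref{1}). For the equality case I would trace every inequality backward: equality forces $f_{\overline{\alpha}\overline{\beta}}=\tfrac34 f_{\overline{\alpha}}\phi_{\overline{\beta}}$ pointwise, $P_0^\phi f=0$, saturation of the curvature bound (\ref{B}) along $(\nabla_b f)_{\mathbb{C}}$, and saturation of both (\ref{1a}) and the oscillation estimate. I expect these overdetermined relations to force $\phi$ to be constant (so that $\omega=l=0$), whereupon the problem collapses to the unweighted CR Obata theorem and the rigidity results of \cite{wu} and \cite{lsw} identify $M$ with the standard CR $(2n+1)$-sphere. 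Establishing rigorously that the system forces $\phi$ constant, and then matching the residual data to the sphere, is the second substantial step of the argument.
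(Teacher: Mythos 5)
Your outline reproduces the paper's own strategy almost step for step: test (\ref{0}) on a first eigenfunction, discard the nonnegative square term and the Paneitz term, bound the curvature and weight terms by (\ref{B}) and (\ref{1a}), and reduce everything to multiples of $\int_M|\overline{\partial}_b f|^2\,d\sigma$ via $\lambda_1\int_M|f|^2\,d\sigma=2\int_M|\overline{\partial}_b f|^2\,d\sigma$; your final arithmetic $\frac{2n+2+n\omega}{4n}\lambda_1\geq k-l$ is exactly the paper's conclusion (the paper keeps the first eigenvalue $\mu_1$ of $P_0^{\phi}$, solves a quadratic in $\lambda_1$, and only then uses $\mu_1\geq 0$, which is equivalent to your linear inequality). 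The genuine gap is the one step you explicitly defer: the cross-term bound with the precise constant $\omega/4$. That step is not a Cauchy--Schwarz/Young estimate, and those tools cannot produce it. After your integration by parts the integrand contains $\phi$ undifferentiated, and $\phi$ itself is \emph{not} pointwise controlled by its oscillation; a bound involving only $\omega$ must come from an exact cancellation implementing the $\phi\mapsto\phi+c$ invariance you correctly identify, not from norm inequalities, which would instead produce $\|\phi\|_{\infty}$ or $\|\partial_b\phi\|_{\infty}$.

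What the paper actually does at this point is the heart of the proof. Because of the specific coefficients $(2n-1)$ and $-(n-1)$ in (\ref{0}), the real part of the cross term collapses, for an eigenfunction, to the exact-form pairing $\frac{1}{8}\lambda_1\int_M\langle\overline{\partial}_b|f|^2,\overline{\partial}_b\phi\rangle\,d\sigma$. Self-adjointness of $\square_b^{\phi}$ with respect to $d\sigma$ converts this into $\frac{1}{16}\lambda_1\int_M\phi\,(\square_b^{\phi}|f|^2)\,d\sigma$, and the product rule together with $\square_b^{\phi}f=\lambda_1 f$ turns it into $\frac{1}{8}\lambda_1\int_M\phi\,[\lambda_1|f|^2-2|\overline{\partial}_b f|^2]\,d\sigma$. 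Only now does the oscillation enter: the two pieces are sign-definite, so $\phi$ may be replaced by $\inf_M\phi$ in the first and $\sup_M\phi$ in the second, and since $\lambda_1\int_M|f|^2\,d\sigma=2\int_M|\overline{\partial}_b f|^2\,d\sigma$ the absolute size of $\phi$ cancels, leaving exactly $-\frac{1}{8}\omega\lambda_1^2\int_M|f|^2\,d\sigma=-\frac{\omega}{4}\lambda_1\int_M|\overline{\partial}_b f|^2\,d\sigma$ (a one-sided bound; no absolute value or Young inequality is needed). Until you carry out this collapse to the pairing with $\overline{\partial}_b|f|^2$ and the subsequent integration by parts, the proof is incomplete at its most delicate point. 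Your equality discussion is also heavier than necessary: in the paper, equality forces saturation of the $\inf/\sup$ replacements, hence $\phi$ is constant, so $\omega=l=0$ and the bound becomes the sharp unweighted one $\lambda_1=\frac{2nk}{n+1}$, at which stage the rigidity theorem of \cite{lsw} identifies $M$ with the standard CR sphere; the pointwise relations $f_{\overline{\alpha}\overline{\beta}}=\frac{3}{4}f_{\overline{\alpha}}\phi_{\overline{\beta}}$ and $P_0^{\phi}f=0$ need not be analyzed separately.
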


Furthermore, $P_{0}^{\phi }$ is nonnegative if the torsion is zero (i.e.
Sasakian) and $\phi _{0}$ vanishes (Lemma \ref{nonnegative}). Then we have
the following CR\ Obata theorem in a closed weighted Sasakian $(2n+1)$%
-manifold.

\begin{corollary}
\label{C}Let $(M,J,\theta ,d\sigma )$ be a closed weighted Sasakian $(2n+1)$%
-manifold with $\phi _{0}=0$. Suppose that 
\begin{equation*}
\begin{array}{c}
\lbrack Ric+\frac{1}{4}Tor(\square _{b}^{\phi })](Z,Z)\geq k\left\langle
Z,Z\right\rangle 
\end{array}%
\end{equation*}%
for all $Z\in T_{1,0}$ and a positive constant $k$. Then the first
eigenvalue of the weighted Kohn Laplacian $\square _{b}^{\phi }$ satisfies
the lower bound 
\begin{equation}
\begin{array}{c}
\lambda _{1}\geq \frac{4n(k-l)}{n\omega +2n+2},%
\end{array}
\label{2}
\end{equation}%
where $\omega =\underset{M}{\mathrm{osc}}\phi =\underset{M}{\sup }\phi -%
\underset{M}{\inf }\phi ,$ and for nonnegative constant $l$ with $0\leq l<k$
such that%
\begin{equation*}
\begin{array}{c}
-12\Delta _{b}\phi +|\overline{\partial }_{b}\phi |^{2}\leq 16l%
\end{array}%
\end{equation*}%
on $M$. Moreover, if the equality (\ref{2}) holds, then $M$ is CR isometric
to a standard CR $(2n+1)$-sphere.
\end{corollary}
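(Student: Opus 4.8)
The plan is to deduce the Corollary directly from Theorem \ref{TB} by verifying that its hypotheses are met under the Sasakian assumption together with $\phi_0 = 0$. The only hypothesis in Theorem \ref{TB} that is not already assumed verbatim in the Corollary is the nonnegativity of the weighted CR Paneitz operator $P_0^\phi$; the curvature lower bound $[Ric + \frac{1}{4}Tor(\square_b^\phi)](Z,Z) \geq k\langle Z,Z\rangle$, the constant $l$, and the inequality $-12\Delta_b\phi + |\overline{\partial}_b\phi|^2 \leq 16l$ are carried over unchanged. So the entire content of the proof is to establish that nonnegativity.

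First I would invoke the hypothesis that $(M,J,\theta,d\sigma)$ is Sasakian, which is precisely the condition that the pseudohermitian torsion vanishes, i.e. $A_{\alpha\beta} = 0$ on $M$. Combined with the assumption $\phi_0 = 0$, this places us exactly in the setting of Remark \ref{r1}: with $A_{\alpha\beta}=0$ the operator $Q^\phi f = 2\sqrt{-1}\,e^{\phi}(e^{-\phi}A^{\alpha\beta}f_\alpha)_{,\beta}$ vanishes identically, and the term $n^2(f_{00} - \phi_0 f_0)$ reduces to $n^2 f_{00} = -n^2 T^2 f$ upon using $\phi_0 = 0$ and the identification of $f_0$ with $Tf$. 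Feeding these simplifications into the formula
\begin{equation*}
P_0^\phi f = \tfrac{1}{4}(\square_b^\phi\square_b^\phi + \overline{\square}_b^\phi\overline{\square}_b^\phi)f + n^2(f_{00}-\phi_0 f_0) - n\func{Re}Q^\phi f
\end{equation*}
from Lemma \ref{lemma} yields the clean identity $4P_0^\phi = \square_b^\phi\square_b^\phi + \overline{\square}_b^\phi\overline{\square}_b^\phi + 4n^2 T^2$, exactly as recorded in Remark \ref{r1}.

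With this identity in hand, nonnegativity follows by pairing against $f$ and integrating. Using the integration-by-parts relation $\int_M \langle \square_b^\phi f, g\rangle\, d\sigma = \int_M \langle f, \square_b^\phi g\rangle\, d\sigma$ stated in the introduction (so that $\square_b^\phi$ is self-adjoint with respect to $d\sigma$, and likewise $\overline{\square}_b^\phi$), I would write
\begin{equation*}
4\int_M \overline{f}\,P_0^\phi f\, d\sigma = \int_M |\square_b^\phi f|^2\, d\sigma + \int_M |\overline{\square}_b^\phi f|^2\, d\sigma + 4n^2\int_M |Tf|^2\, d\sigma \geq 0,
\end{equation*}
where the cross terms are handled by self-adjointness and the $T^2$ term is integrated by parts once to produce $|Tf|^2$ (here $T$ is skew-adjoint with respect to $d\sigma$ because $\phi_0 = 0$ guarantees $T$ preserves the weighted measure). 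This is precisely the assertion of Lemma \ref{nonnegative} cited in the text. Once $P_0^\phi \geq 0$ is established, all hypotheses of Theorem \ref{TB} hold, and the eigenvalue bound (\ref{2}) together with the rigidity statement for the equality case follows immediately from the conclusion of that theorem.

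The step I expect to require the most care is the reduction of the $T$-dependent terms in the integrated identity, specifically confirming that $\phi_0 = 0$ is exactly what is needed for $T$ to be skew-adjoint against $d\sigma = e^{-\phi}\theta\wedge(d\theta)^n$ so that $\int_M \overline{f}\,T^2 f\, d\sigma = \int_M |Tf|^2\, d\sigma$ with the correct sign; any stray first-order term surviving here would obstruct the clean nonnegativity. The remaining steps are bookkeeping, since they merely transcribe the hypotheses of the Corollary into those of Theorem \ref{TB}.
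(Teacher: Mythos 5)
Your overall strategy coincides with the paper's: Corollary \ref{C} is indeed proved by checking that vanishing torsion (Sasakian) together with $\phi_0=0$ forces $P_0^\phi\geq 0$, and then invoking Theorem \ref{TB}; and the identity $4P_0^\phi=\square_b^\phi\square_b^\phi+\overline{\square}_b^\phi\overline{\square}_b^\phi+4n^2T^2$ you arrive at is correct (it is Remark \ref{r1}; note in passing that $f_{00}=+T^2f$, not $-T^2f$ as you wrote, or else your own identity would not come out). The genuine gap is in the step where you integrate this identity against $\overline{f}$. Since $T$ is skew-adjoint with respect to $d\sigma$ when $\phi_0=0$ (that part of your reasoning is right), one has
\begin{equation*}
\int_M \overline{f}\,T^2f\,d\sigma=-\int_M|Tf|^2\,d\sigma\leq 0,
\end{equation*}
so the correct consequence is
\begin{equation*}
4\int_M\overline{f}\,P_0^\phi f\,d\sigma=\int_M|\square_b^\phi f|^2\,d\sigma+\int_M|\overline{\square}_b^\phi f|^2\,d\sigma-4n^2\int_M|Tf|^2\,d\sigma,
\end{equation*}
with a minus sign on the last term, not the plus sign you claim. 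This is precisely the integral identity the paper records at the end of Section 3 (the weighted analogue of (\ref{5}), with the torsion term absent here). Being a difference of nonnegative quantities, it is not manifestly nonnegative, so nonnegativity of $P_0^\phi$ cannot be read off by this direct integration by parts. The difficulty you flagged as "a stray first-order term" is in fact a wrong sign that kills the argument.

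This is why the paper's Lemma \ref{nonnegative} (following Chang--Chiu) is more subtle: it factors
\begin{equation*}
4P_0^\phi=(\square_b^\phi-\sqrt{-2}nT)(\square_b^\phi+\sqrt{-2}nT)+(\overline{\square}_b^\phi+\sqrt{-2}nT)(\overline{\square}_b^\phi-\sqrt{-2}nT),
\end{equation*}
notes that the factors are self-adjoint and commute because $[\square_b^\phi,T]=0$ --- which by (\ref{3a}) uses both $A_{\alpha\beta}=0$ and $\phi_0=0$ --- shows each factor is a nonnegative operator since $\int_M\langle\varphi,(\square_b^\phi\mp\sqrt{-2}nT)\varphi\rangle\,d\sigma=\int_M|\overline{\partial}_b\varphi|^2\,d\sigma$ when $\phi_0=0$, and then concludes by simultaneous diagonalization on joint eigenspaces, where $4P_0^\phi$ acts by sums of products of nonnegative eigenvalues. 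Your proof becomes correct (and identical to the paper's) if you either reproduce this commuting-factor argument or simply cite Lemma \ref{nonnegative} as a black box; the remaining bookkeeping reduction to Theorem \ref{TB}, including the rigidity statement in the equality case, is fine as you describe it.
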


Note that by comparing (\ref{A}), (\ref{B}) and (\ref{1a}), it has a plenty
of rooms for the choice of the weighted function $\phi $. For example, it is
the case by a small perturbation of the subhessian of the weighted function $%
\phi $.

We briefly describe the methods used in our proofs. In section $3$, we
introduce the weighted CR Paneitz operator $P_{0}^{\phi }$. In section $4$,
by using integrating by parts to the weighted CR Bochner formula (\ref{9}),
we can derive the CR version of weighted Reilly's formula. By applying such
formula, we are able to obtain the first eigenvalue estimate and weighted
Obata theorem as in section $5$ in a closed strictly pseudoconvex CR $(2n+1)$%
-manifold.

\section{Basic Notions in Pseudohermitian Geometry}

Let us give a brief introduction to pseudohermitian geometry (see \cite{l}
for more details). Let $(M,\xi )$ be a $(2n+1)$-dimensional, orientable,
contact manifold with contact structure $\xi ,\ \dim _{R}\xi =2n$. A CR
structure compatible with $\xi $ is an endomorphism $J:\xi \rightarrow \xi $
such that $J^{2}=-1$. We also assume that $J$ satisfies the following
integrability condition: If $X$ and $Y$ are in $\xi $, then so is $%
[JX,Y]+[X,JY]$ and $J([JX,Y]+[X,JY])=[JX,JY]-[X,Y]$. A CR structure $J$ can
extend to $\mathbb{C}\mathbf{\otimes }\xi $ and decomposes $\mathbb{C}%
\mathbf{\otimes }\xi $ into the direct sum of $T_{1,0}$ and $T_{0,1}$ which
are eigenspaces of $J$ with respect to $i$ and $-i$, respectively. A
pseudohermitian structure compatible with $\xi $ is a CR structure $J$
compatible with $\xi $ together with a choice of contact form $\theta $.
Such a choice determines a unique real vector field $T$ transverse to $\xi $%
, which is called the characteristic vector field of $\theta $, such that ${%
\theta }(T)=1$ and $\mathcal{L}_{T}{\theta }=0$ or $d{\theta }(T,{\cdot })=0$%
. Let $\left\{ T,Z_{\alpha },Z_{\bar{\alpha}}\right\} $ be a frame of $%
TM\otimes \mathbb{C}$, where $Z_{\alpha }$ is any local frame of $T_{1,0},\
Z_{\bar{\alpha}}=\overline{Z_{\alpha }}\in T_{0,1}$ and $T$ is the
characteristic vector field. Then $\left\{ \theta ,\theta ^{\alpha },\theta
^{\bar{\alpha}}\right\} $, which is the coframe dual to $\left\{ T,Z_{\alpha
},Z_{\bar{\alpha}}\right\} $, satisfies 
\begin{equation*}
\begin{array}{c}
d\theta =ih_{\alpha \bar{\beta}}\theta ^{\alpha }\wedge \theta ^{\bar{\beta}%
},%
\end{array}%
\end{equation*}%
for some positive definite hermitian matrix of functions $(h_{\alpha \bar{%
\beta}})$. Actually we can always choose $Z_{\alpha }$ such that $h_{\alpha 
\bar{\beta}}=\delta _{\alpha \beta };$ hence, throughout this paper, we
assume $h_{\alpha \bar{\beta}}=\delta _{\alpha \beta }.$

The Levi form $\left\langle \ ,\ \right\rangle $ is the Hermitian form on $%
T_{1,0}$ defined by%
\begin{equation*}
\begin{array}{c}
\left\langle Z,W\right\rangle =-i\left\langle d\theta ,Z\wedge \overline{W}%
\right\rangle .%
\end{array}%
\end{equation*}%
We can extend $\left\langle \ ,\ \right\rangle $ to $T_{0,1}$ by defining $%
\left\langle \overline{Z},\overline{W}\right\rangle =\overline{\left\langle
Z,W\right\rangle }$ for all $Z,W\in T_{1,0}$. The Levi form induces
naturally a Hermitian form on the dual bundle of $T_{1,0}$, also denoted by $%
\left\langle \ ,\ \right\rangle $, and hence on all the induced tensor
bundles.

The pseudohermitian connection of $(J,\theta )$ is the connection $\nabla $
on $TM\otimes \mathbb{C}$ (and extended to tensors) given in terms of a
local frame $Z_{\alpha }\in T_{1,0}$ by%
\begin{equation*}
\begin{array}{c}
\nabla Z_{\alpha }=\omega _{\alpha }{}^{\beta }\otimes Z_{\beta },\quad
\nabla Z_{\bar{\alpha}}=\omega _{\bar{\alpha}}{}^{\bar{\beta}}\otimes Z_{%
\bar{\beta}},\quad \nabla T=0,%
\end{array}%
\end{equation*}%
where $\omega _{\alpha }{}^{\beta }$ are the $1$-forms uniquely determined
by the following equations:%
\begin{equation*}
\begin{split}
d\theta ^{\beta }& =\theta ^{\alpha }\wedge \omega _{\alpha }{}^{\beta
}+\theta \wedge \tau ^{\beta }, \\
\tau _{\alpha }\wedge \theta ^{\alpha }& =0,\text{ \ }\omega _{\alpha
}{}^{\beta }+\omega _{\bar{\beta}}{}^{\bar{\alpha}}=0.
\end{split}%
\end{equation*}%
We can write $\tau _{\alpha }=A_{\alpha \beta }\theta ^{\beta }$ with $%
A_{\alpha \beta }=A_{\beta \alpha }$. The curvature of the Webster-Stanton
connection, expressed in terms of the coframe $\{\theta =\theta ^{0},\theta
^{\alpha },\theta ^{\bar{\alpha}}\}$, is 
\begin{equation*}
\begin{split}
\Pi _{\beta }{}^{\alpha }& =\overline{\Pi _{\bar{\beta}}{}^{\bar{\alpha}}}%
=d\omega _{\beta }{}^{\alpha }-\omega _{\beta }{}^{\gamma }\wedge \omega
_{\gamma }{}^{\alpha }, \\
\Pi _{0}{}^{\alpha }& =\Pi _{\alpha }{}^{0}=\Pi _{0}{}^{\bar{\beta}}=\Pi _{%
\bar{\beta}}{}^{0}=\Pi _{0}{}^{0}=0.
\end{split}%
\end{equation*}%
Webster showed that $\Pi _{\beta }{}^{\alpha }$ can be written 
\begin{equation*}
\begin{array}{c}
\Pi _{\beta }{}^{\alpha }=R_{\beta }{}^{\alpha }{}_{\rho \bar{\sigma}}\theta
^{\rho }\wedge \theta ^{\bar{\sigma}}+W_{\beta }{}^{\alpha }{}_{\rho }\theta
^{\rho }\wedge \theta -W^{\alpha }{}_{\beta \bar{\rho}}\theta ^{\bar{\rho}%
}\wedge \theta +i\theta _{\beta }\wedge \tau ^{\alpha }-i\tau _{\beta
}\wedge \theta ^{\alpha },%
\end{array}%
\end{equation*}%
where the coefficients satisfy 
\begin{equation*}
\begin{array}{c}
R_{\beta \bar{\alpha}\rho \bar{\sigma}}=\overline{R_{\alpha \bar{\beta}%
\sigma \bar{\rho}}}=R_{\bar{\alpha}\beta \bar{\sigma}\rho }=R_{\rho \bar{%
\alpha}\beta \bar{\sigma}},\ \ W_{\beta \bar{\alpha}\gamma }=W_{\gamma \bar{%
\alpha}\beta }.%
\end{array}%
\end{equation*}

We will denote components of covariant derivatives with indices preceded by
comma; thus write $A_{\alpha \beta ,\gamma }$. The indices $\{0,\alpha ,\bar{%
\alpha}\}$ indicate derivatives with respect to $\{T,Z_{\alpha },Z_{\bar{%
\alpha}}\}$. For derivatives of a function, we will often omit the comma,
for instance, $\varphi _{\alpha }=Z_{\alpha }\varphi ,\ \varphi _{\alpha 
\bar{\beta}}=Z_{\bar{\beta}}Z_{\alpha }\varphi -\omega _{\alpha }{}^{\gamma
}(Z_{\bar{\beta}})Z_{\gamma }\varphi ,\ \varphi _{0}=T\varphi $ for a
(smooth) function $\varphi $. The Cauchy-Riemann operator $\partial _{b}$ be
defined locally by $\partial _{b}\varphi =\varphi _{\alpha }\theta ^{\alpha
},$ and $\overline{\partial }_{b}$ be the conjugate of $\partial _{b}.$ If
we define $\partial _{b}f=f_{\alpha }\theta ^{\alpha }$ and $\bar{\partial}%
_{b}f=f_{\overline{\alpha }}\theta ^{\overline{\alpha }}$, then the formal
adjoint of $\partial _{b}$ on functions (with respect to the Levi form and
the volume form $\theta \wedge (d\theta )^{n}$) is $\partial _{b}^{\ast
}=-\delta _{b}$. For a function $\varphi $, the subgradient $\nabla _{b}$ is
defined locally by $\nabla _{b}\varphi =\varphi ^{\alpha }Z_{\alpha
}+\varphi ^{\overline{\alpha }}Z_{\bar{\alpha}}$. The sub-Laplacian $\Delta
_{b}$ and the Kohn Laplacian $\square _{b}$ on functions are defined by 
\begin{equation*}
\begin{array}{c}
\Delta _{b}\varphi =-(\varphi _{\alpha }{}^{\alpha }+\varphi _{\overline{%
\alpha }}{}^{\overline{\alpha }}),\text{ \ }\square _{b}\varphi =(\Delta
_{b}+\sqrt{-1}nT)\varphi =-2\varphi _{\overline{\alpha }}{}^{\overline{%
\alpha }}.%
\end{array}%
\end{equation*}%
The Webster-Ricci tensor and the torsion tensor on $T_{1,0}$ are defined by 
\begin{equation*}
\begin{array}{l}
Ric(X,Y)=R_{\alpha \bar{\beta}}X^{\alpha }Y^{\bar{\beta}}, \\ 
Tor(X,Y)=\sqrt{-1}\sum_{\alpha ,\beta }(A_{\bar{\alpha}\bar{\beta}}X^{\bar{%
\alpha}}Y^{\bar{\beta}}-A_{\alpha \beta }X^{\alpha }Y^{\beta }),%
\end{array}%
\end{equation*}%
where $X=X^{\alpha }Z_{\alpha },\ Y=Y^{\beta }Z_{\beta },\ R_{\alpha \bar{%
\beta}}=R_{\gamma }{}^{\gamma }{}_{\alpha \bar{\beta}}.$ The Webster scalar
curvature is $R=R_{\alpha }{}^{\alpha }=h^{\alpha \bar{\beta}}R_{\alpha \bar{%
\beta}}.$

\section{The weighted CR Paneitz operator}

Let $M$ be a closed strictly pseudoconvex CR $(2n+1)$-manifold with a
weighted volume measure $d\sigma =e^{-\phi (x)}\theta \wedge \left( d\theta
\right) ^{n}$ for a given smooth real function $\phi $. In this section, we
define the weighted CR Paneitz operator $P_{0}^{\phi }$. First we recall the
definition of the CR Paneitz operator $P_{0}.$

\begin{definition}
Let $(M,J,\theta )$ be a strictly pseudoconvex CR $(2n+1)$-manifold. We
define 
\begin{equation*}
\begin{array}{c}
Pf=\sum_{\gamma ,\beta =1}^{n}(f_{\overline{\gamma }\;\ \beta }^{\,\overline{%
\text{ }\gamma }}+\sqrt{-1}nA_{\beta \gamma }f^{\gamma })\theta ^{\beta
}=\sum_{\beta =1}^{n}(P_{\beta }f)\theta ^{\beta },%
\end{array}%
\end{equation*}%
which is an operator that characterizes CR-pluriharmonic functions. Here 
\begin{equation*}
\begin{array}{c}
P_{\beta }f=\sum_{\gamma =1}^{n}(f_{\overline{\gamma }\;\ \beta }^{\,\text{\ 
}\overline{\gamma }}+\sqrt{-1}nA_{\beta \gamma }f^{\gamma }),\text{ \ }\beta
=1,\cdots ,n,%
\end{array}%
\end{equation*}%
and $\overline{P}f=\sum_{\beta =1}^{n}\left( \overline{P}_{\beta }f\right)
\theta ^{\overline{\beta }}$, the conjugate of $P$. The CR Paneitz operator $%
P_{0}$ is defined by 
\begin{equation*}
\begin{array}{c}
P_{0}f=\delta _{b}(Pf)+\overline{\delta }_{b}(\overline{P}f),%
\end{array}%
\end{equation*}%
where $\delta _{b}$ is the divergence operator that takes $(1,0)$-forms to
functions by $\delta _{b}(\sigma _{\beta }\theta ^{\beta })=\sigma _{\beta
}^{\;\ \beta }$, and similarly, $\overline{\delta }_{b}(\sigma _{\overline{%
\beta }}\theta ^{\overline{\beta }})=\sigma _{\overline{\beta }}^{\;\ 
\overline{\beta }}$.
\end{definition}

We define the purely holomorphic second-order operator $Q$ by 
\begin{equation*}
\begin{array}{c}
Qf=2\sqrt{-1}(A^{\alpha \beta }f_{\alpha }),_{\beta }.%
\end{array}%
\end{equation*}%
It implies that $[T,\Delta _{b}]f=2\func{Im}Qf$ and observe from \cite{gl}
that 
\begin{equation}
\begin{array}{lll}
2P_{0}f & = & (\Delta _{b}^{2}+n^{2}T^{2})f-2n\mathrm{Re}Qf \\ 
& = & \square _{b}\overline{\square }_{b}f-2nQf\text{ }=\text{ }\overline{%
\square }_{b}\square _{b}f-2n\overline{Q}f.%
\end{array}
\label{3}
\end{equation}

With respect to the weighted volume measure $d\sigma =e^{-\phi (x)}\theta
\wedge \left( d\theta \right) ^{n},$ we define the purely holomorphic
second-order operator $Q^{\phi }$ by%
\begin{equation*}
\begin{array}{c}
Q^{\phi }f=Qf-2\sqrt{-1}(A^{\alpha \beta }f_{\alpha }\phi _{\beta })=2\sqrt{%
-1}e^{\phi }(e^{-\phi }A^{\alpha \beta }f_{\alpha }),_{\beta },%
\end{array}%
\end{equation*}%
and we have%
\begin{equation}
\begin{array}{c}
\lbrack T,\square _{b}^{\phi }]f=2\func{Im}Q^{\phi }f+2\left\langle 
\overline{\partial }_{b}f,\overline{\partial }_{b}\phi _{0}\right\rangle .%
\end{array}
\label{3a}
\end{equation}%
Now we can define the weighted CR Paneitz operator $P_{0}^{\phi }$ as follows%
\begin{equation*}
\begin{array}{c}
P_{0}^{\phi }f=e^{\phi }[\delta _{b}(e^{-\phi }P^{\phi }f)+\overline{\delta }%
_{b}(e^{-\phi }\overline{P}^{\phi }f)],%
\end{array}%
\end{equation*}%
here $P^{\phi }f=\sum_{\beta =1}^{n}(P_{\beta }^{\phi }f)\theta ^{\beta }$
with 
\begin{equation*}
\begin{array}{c}
P_{\beta }^{\phi }f=P_{\beta }f-\partial _{Z_{\beta }}\left\langle \partial
_{b}f,\partial _{b}\phi \right\rangle +n\sqrt{-1}f_{0}\phi _{\beta }%
\end{array}%
\end{equation*}%
and $\overline{P}^{\phi }f=\sum_{\beta =1}^{n}(\overline{P}_{\beta }^{\phi
}f)\theta ^{\overline{\beta }}$, the conjugate of $P^{\phi }$.

First we compare the relation between $P_{0}^{\phi }$ and $P_{0}.$

\begin{lemma}
Let $(M,J,\theta )$ be a closed strictly pseudoconvex CR $(2n+1)$-manifold.
We obtain%
\begin{equation}
\begin{array}{lll}
P_{0}^{\phi }f & = & P_{0}f-\langle Pf+\overline{P}f,d_{b}\phi \rangle +%
\frac{1}{2}[\overline{\square }_{b}^{\phi }\left\langle \partial
_{b}f,\partial _{b}\phi \right\rangle +\square _{b}^{\phi }\left\langle 
\overline{\partial }_{b}f,\overline{\partial }_{b}\phi \right\rangle ] \\ 
&  & +n\sqrt{{\small -1}}[\langle \overline{\partial }_{b}f_{0},\overline{%
\partial }_{b}\phi \rangle -\langle \partial _{b}f_{0},\partial _{b}\phi
\rangle ]-n^{2}f_{0}\phi _{0}.%
\end{array}
\label{4a}
\end{equation}
\end{lemma}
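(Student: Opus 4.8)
The plan is to prove \eqref{4a} by expanding $P_0^\phi f$ directly from its definition and sorting the resulting terms into the four groups on the right-hand side. The first move is to strip off the weight with the Leibniz rule. Since $\delta_b(\sigma_\beta\theta^\beta)=\sigma_\beta{}^{,\beta}$, setting $\sigma_\beta=e^{-\phi}P_\beta^\phi f$ and differentiating the factor $e^{-\phi}$ gives
\[ e^{\phi}\delta_b(e^{-\phi}P^\phi f)=\delta_b(P^\phi f)-\langle P^\phi f,d_b\phi\rangle , \]
together with the conjugate identity for $\overline{\delta}_b$, so that $P_0^\phi f=\delta_b(P^\phi f)+\overline{\delta}_b(\overline{P}^\phi f)-\langle P^\phi f+\overline{P}^\phi f,d_b\phi\rangle$, where the pairing against $d_b\phi$ is the contraction $\sum_\beta\phi_{\overline{\beta}}P_\beta^\phi f$ plus its conjugate.

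Next I would substitute $P_\beta^\phi f=P_\beta f-g_\beta+n\sqrt{-1}f_0\phi_\beta$, writing $g=\langle\partial_b f,\partial_b\phi\rangle$ and $\widetilde g=\langle\overline{\partial}_b f,\overline{\partial}_b\phi\rangle$, and collect. The terms with the undifferentiated $P_\beta f$ combine to $\delta_b(Pf)+\overline{\delta}_b(\overline Pf)=P_0 f$ and to $-\langle Pf+\overline Pf,d_b\phi\rangle$, the first two pieces of \eqref{4a}. The terms coming from $-g_\beta$ and its conjugate produce $-\delta_b(\partial_b g)$ and $-\overline{\delta}_b(\overline{\partial}_b\widetilde g)$; since $\delta_b\partial_b g=-\tfrac12\overline{\square}_b g$ and $\overline{\delta}_b\overline{\partial}_b\widetilde g=-\tfrac12\square_b\widetilde g$ by the definitions of the Kohn Laplacians, these equal $\tfrac12\overline{\square}_b g$ and $\tfrac12\square_b\widetilde g$. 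The key observation is that the stray first-order pieces $+\langle\partial_b g,\partial_b\phi\rangle$ and $+\langle\overline{\partial}_b\widetilde g,\overline{\partial}_b\phi\rangle$ thrown off by the weight pairing are exactly what promotes these to the weighted operators: $\tfrac12\overline{\square}_b g+\langle\partial_b g,\partial_b\phi\rangle=\tfrac12\overline{\square}_b^\phi g$ and $\tfrac12\square_b\widetilde g+\langle\overline{\partial}_b\widetilde g,\overline{\partial}_b\phi\rangle=\tfrac12\square_b^\phi\widetilde g$, which is the third group of \eqref{4a}.

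Finally I would handle the $f_0$ terms. The purely algebraic contributions $-n\sqrt{-1}f_0|\partial_b\phi|^2$ and $+n\sqrt{-1}f_0|\partial_b\phi|^2$, arising from $n\sqrt{-1}f_0\phi_\beta$ in the two pairings, cancel. Expanding $n\sqrt{-1}\bigl[\sum_\beta(f_0\phi_\beta)_{,\overline\beta}-\sum_\beta(f_0\phi_{\overline\beta})_{,\beta}\bigr]$ by Leibniz splits off the first-order part $n\sqrt{-1}[\langle\overline{\partial}_b f_0,\overline{\partial}_b\phi\rangle-\langle\partial_b f_0,\partial_b\phi\rangle]$, matching the fourth group, and leaves $n\sqrt{-1}f_0\sum_\beta(\phi_{\beta\overline\beta}-\phi_{\overline\beta\beta})$. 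Here the commutation identity for second covariant derivatives of a function, which follows from $\square_b=\Delta_b+\sqrt{-1}nT$ and reads $\sum_\beta(\phi_{\beta\overline\beta}-\phi_{\overline\beta\beta})=\sqrt{-1}n\phi_0$, turns this into $-n^2 f_0\phi_0$, the last term of \eqref{4a}. The bulk of the work is bookkeeping; the points that must be gotten right are the sign in the Leibniz expansion of $\delta_b(e^{-\phi}\cdot)$, the recognition that the stray first-order pairings are precisely what upgrades $\square_b,\overline{\square}_b$ to $\square_b^\phi,\overline{\square}_b^\phi$, and the correct normalization of the commutator identity that yields the $-n^2 f_0\phi_0$ term.
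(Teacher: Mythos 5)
Your proposal is correct and follows essentially the same route as the paper: strip the weight $e^{-\phi}$ via the Leibniz rule to get $\delta_b(P^\phi f)-\langle P^\phi f,\partial_b\phi\rangle$ plus its conjugate, substitute the definition of $P_\beta^\phi f$, and collect terms, with the stray pairings $\langle\partial_b g,\partial_b\phi\rangle$, $\langle\overline{\partial}_b\widetilde g,\overline{\partial}_b\phi\rangle$ upgrading $\overline{\square}_b,\square_b$ to $\overline{\square}_b^\phi,\square_b^\phi$ and the commutation identity $\sum_\beta(\phi_{\beta\overline{\beta}}-\phi_{\overline{\beta}\beta})=\sqrt{-1}\,n\phi_0$ producing $-n^2f_0\phi_0$. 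The paper compresses all of this into a single displayed computation; your write-up supplies the same bookkeeping with the cancellations (e.g.\ of the $\pm n\sqrt{-1}f_0|\partial_b\phi|^2$ terms) made explicit.
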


\begin{proof}
By the definition of $P_{0}^{\phi }$, we compute%
\begin{equation*}
\begin{array}{lll}
P_{0}^{\phi }f & = & \delta _{b}(P^{\phi }f)-\langle P^{\phi }f,\partial
_{b}\phi \rangle +\overline{\delta }_{b}(\overline{P}^{\phi }f)-\langle 
\overline{P}^{\phi }f,\overline{\partial }_{b}\phi \rangle \\ 
& = & (P_{\beta }f-\left\langle \partial _{b}f,\partial _{b}\phi
\right\rangle _{,\beta }+n\sqrt{{\small -1}}f_{0}\phi _{\beta })^{,\beta
}-\left\langle Pf-\partial _{b}\left\langle \partial _{b}f,\partial _{b}\phi
\right\rangle +n\sqrt{{\small -1}}f_{0}\partial _{b}\phi ,\partial _{b}\phi
\right\rangle \\ 
&  & +(P_{\overline{\beta }}f-\left\langle \overline{\partial }_{b}f,%
\overline{\partial }_{b}\phi \right\rangle _{,\overline{\beta }}-n\sqrt{%
{\small -1}}f_{0}\phi _{\overline{\beta }})^{,\overline{\beta }%
}-\left\langle \overline{P}f-\overline{\partial }_{b}\left\langle \overline{%
\partial }_{b}f,\overline{\partial }_{b}\phi \right\rangle -n\sqrt{{\small -1%
}}f_{0}\overline{\partial }_{b}\phi ,\overline{\partial }_{b}\phi
\right\rangle \\ 
& = & P_{0}f-\langle Pf+\overline{P}f,d_{b}\phi \rangle +\frac{1}{2}[%
\overline{\square }_{b}^{\phi }\left\langle \partial _{b}f,\partial _{b}\phi
\right\rangle +\square _{b}^{\phi }\left\langle \overline{\partial }_{b}f,%
\overline{\partial }_{b}\phi \right\rangle ] \\ 
&  & +n\sqrt{{\small -1}}[\langle \overline{\partial }_{b}f_{0},\overline{%
\partial }_{b}\phi \rangle -\langle \partial _{b}f_{0},\partial _{b}\phi
\rangle ]-n^{2}f_{0}\phi _{0}.%
\end{array}%
\end{equation*}
\end{proof}

Second we have the similar formula for $P_{0}^{\phi }$ like the expression (%
\ref{3}) for $P_{0}$.

\begin{lemma}
\label{lemma}Let $(M,J,\theta )$ be a closed strictly pseudoconvex CR $%
(2n+1) $-manifold. We have 
\begin{equation}
\begin{array}{l}
P_{0}^{\phi }f=\frac{1}{4}(\square _{b}^{\phi }\square _{b}^{\phi }+%
\overline{\square }_{b}^{\phi }\overline{\square }_{b}^{\phi
}+4n^{2}T^{2})f-n\func{Re}Q^{\phi }f-n^{2}f_{0}\phi _{0}.%
\end{array}
\label{4}
\end{equation}
\end{lemma}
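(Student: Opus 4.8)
The plan is to compute the right-hand side of (\ref{4}) and match it against the expression (\ref{4a}) for $P_0^\phi f$ established in the previous lemma. The backbone is the unweighted case. Writing $\square_b=\Delta_b+\sqrt{-1}\,nT$ and $\overline\square_b=\Delta_b-\sqrt{-1}\,nT$, the odd cross terms cancel in the symmetric sum, giving $\square_b\square_b+\overline\square_b\overline\square_b=2\Delta_b^2-2n^2T^2$, hence $\square_b\square_b+\overline\square_b\overline\square_b+4n^2T^2=2(\Delta_b^2+n^2T^2)$. Combined with (\ref{3}) this yields
\[
\tfrac14\big(\square_b\square_b+\overline\square_b\overline\square_b+4n^2T^2\big)f=P_0f+n\operatorname{Re}Qf,
\]
which is precisely (\ref{4}) for $\phi=0$ and serves as the backbone once the weight is switched on.

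To switch on the weight I would expand $\square_b^\phi\square_b^\phi f$ from the definition $\square_b^\phi g=\square_b g+2\langle\overline\partial_bg,\overline\partial_b\phi\rangle$. Setting $u=\langle\overline\partial_bf,\overline\partial_b\phi\rangle$ and $v=\langle\partial_bf,\partial_b\phi\rangle$, one finds $\square_b^\phi\square_b^\phi f=\square_b\square_b f+2\square_b^\phi u+2\langle\overline\partial_b\square_bf,\overline\partial_b\phi\rangle$ together with its conjugate for $\overline\square_b^\phi\overline\square_b^\phi f$. Summing the two, adjoining the (unweighted) term $4n^2T^2f$, dividing by $4$, and invoking the backbone identity, the fourth-order part collapses to $P_0f+n\operatorname{Re}Qf$ and leaves exactly the corrections $\tfrac12(\square_b^\phi u+\overline\square_b^\phi v)$ and $\tfrac12(\langle\overline\partial_b\square_bf,\overline\partial_b\phi\rangle+\langle\partial_b\overline\square_bf,\partial_b\phi\rangle)$. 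Subtracting $n\operatorname{Re}Q^\phi f$ and comparing with (\ref{4a}), the terms $P_0f$, $\tfrac12(\square_b^\phi u+\overline\square_b^\phi v)$ and $-n^2f_0\phi_0$ cancel verbatim, so the whole lemma reduces to the single third-order identity
\[
n\operatorname{Re}(Q-Q^\phi)f+\tfrac12\big(\langle\overline\partial_b\square_bf,\overline\partial_b\phi\rangle+\langle\partial_b\overline\square_bf,\partial_b\phi\rangle\big)=-\langle Pf+\overline Pf,d_b\phi\rangle+n\sqrt{-1}\big(\langle\overline\partial_bf_0,\overline\partial_b\phi\rangle-\langle\partial_bf_0,\partial_b\phi\rangle\big).
\]

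I would verify this identity by sorting terms according to whether the weight enters through $\phi_{\overline\beta}$ or through $\phi_\beta$; the two groups are conjugate, so it suffices to treat one. Using the commutation relation $\varphi_\alpha{}^\alpha-\varphi_{\overline\alpha}{}^{\overline\alpha}=\sqrt{-1}\,n\varphi_0$, equivalently $\overline\square_bf=\square_bf-2\sqrt{-1}\,nf_0$, and the defining relation $P_\beta f=f_{\overline\gamma}{}^{\overline\gamma}{}_\beta+\sqrt{-1}\,nA_{\beta\gamma}f^\gamma$, the cross term $\tfrac12\langle\partial_b\overline\square_bf,\partial_b\phi\rangle$ splits into $-\langle Pf,\partial_b\phi\rangle$, the $T$-derivative contribution $-\sqrt{-1}\,nf_{0\beta}\phi_{\overline\beta}$, and a residual torsion term $+\sqrt{-1}\,nA_{\beta\gamma}f^\gamma\phi_{\overline\beta}$. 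Substituting $Q^\phi f=Qf-2\sqrt{-1}\,A^{\alpha\beta}f_\alpha\phi_\beta$ and reading $\operatorname{Re}Q$ as the $\mathbb{C}$-linear operator $\tfrac12(Q+\overline Q)$, the $\phi_{\overline\beta}$-part of $n\operatorname{Re}(Q-Q^\phi)f$ is exactly $-\sqrt{-1}\,nA_{\beta\gamma}f^\gamma\phi_{\overline\beta}$, which cancels the residual torsion term and leaves the matching $\phi_{\overline\beta}$-contributions, namely $-\langle Pf,\partial_b\phi\rangle-\sqrt{-1}\,nf_{0\beta}\phi_{\overline\beta}$, on both sides.

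The main obstacle is this last cancellation, where the algebra is genuinely delicate: one must interpret $\operatorname{Re}Q^\phi$ as the $\mathbb{C}$-linear operator $\tfrac12(Q^\phi+\overline Q^\phi)$ rather than the pointwise real part of a complex number (since $f$ is complex), and keep the raised and barred torsion indices straight so that the holomorphic torsion $A_{\beta\gamma}f^\gamma$ produced by the $P_\beta$-reduction pairs precisely against the torsion coming from $Q-Q^\phi$. Once this is verified for the $\phi_{\overline\beta}$-group, conjugation settles the $\phi_\beta$-group; notably no Webster curvature terms survive, because the identity $\overline\square_bf=\square_bf-2\sqrt{-1}\,nf_0$ is used in place of a full third-order commutation, and (\ref{4}) follows.
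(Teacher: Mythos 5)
Your proposal is correct and takes essentially the same route as the paper: the expansion $\square_b^\phi\square_b^\phi f=\square_b\square_b f+2\square_b^\phi\langle\overline{\partial}_bf,\overline{\partial}_b\phi\rangle+2\langle\overline{\partial}_b\square_bf,\overline{\partial}_b\phi\rangle$, the unweighted identity (\ref{3}), the relation (\ref{4a}), and the third-order commutation (via $\overline{\square}_bf=\square_bf-2\sqrt{-1}\,nf_0$ feeding the definition of $P_\beta$) that produces the $P$, torsion, and $f_0$ terms are exactly the ingredients of the paper's computation. The only difference is bookkeeping: you isolate the residual third-order identity and verify it by conjugation symmetry, whereas the paper substitutes that same commutation identity directly into the expansion of $\frac{1}{2}\square_b^\phi\square_b^\phi f$ before summing with the conjugate and invoking (\ref{3}) and (\ref{4a}).
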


\begin{proof}
By the straightforward calculation, we have%
\begin{equation*}
\begin{array}{lll}
\frac{1}{2}\square _{b}^{\phi }\square _{b}^{\phi }f & = & \frac{1}{2}%
\square _{b}^{\phi }(\square _{b}f+2\left\langle \overline{\partial }_{b}f,%
\overline{\partial }_{b}\phi \right\rangle ) \\ 
& = & \frac{1}{2}\square _{b}\square _{b}f+\langle \overline{\partial }%
_{b}\square _{b}f,\overline{\partial }_{b}\phi \rangle +\square _{b}^{\phi
}\left\langle \overline{\partial }_{b}f,\overline{\partial }_{b}\phi
\right\rangle \\ 
& = & \frac{1}{2}\square _{b}\square _{b}f-2\langle \overline{P}f,\overline{%
\partial }_{b}\phi \rangle +2n\sqrt{{\small -1}}(\langle \overline{\partial }%
_{b}f_{0},\overline{\partial }_{b}\phi \rangle -A^{\alpha \beta }f_{\alpha
}\phi _{\beta })+\square _{b}^{\phi }\left\langle \overline{\partial }_{b}f,%
\overline{\partial }_{b}\phi \right\rangle ,%
\end{array}%
\end{equation*}%
which implies%
\begin{equation*}
\begin{array}{ll}
& \frac{1}{4}[(\square _{b}^{\phi }\square _{b}^{\phi }+\overline{\square }%
_{b}^{\phi }\overline{\square }_{b}^{\phi })-(\square _{b}\square _{b}+%
\overline{\square }_{b}\overline{\square }_{b})]f \\ 
= & -\langle Pf+\overline{P}f,d_{b}\phi \rangle +\frac{1}{2}[\square
_{b}^{\phi }\left\langle \overline{\partial }_{b}f,\overline{\partial }%
_{b}\phi \right\rangle +\overline{\square }_{b}^{\phi }\left\langle \partial
_{b}f,\partial _{b}\phi \right\rangle ] \\ 
& +n\sqrt{{\small -1}}[\langle \overline{\partial }_{b}f_{0},\overline{%
\partial }_{b}\phi \rangle -\langle \partial _{b}f_{0},\partial _{b}\phi
\rangle +A^{\overline{\alpha }\overline{\beta }}f_{\overline{\alpha }}\phi _{%
\overline{\beta }}-A^{\alpha \beta }f_{\alpha }\phi _{\beta }].%
\end{array}%
\end{equation*}%
We then have%
\begin{equation*}
\begin{array}{ll}
& \frac{1}{4}(\square _{b}^{\phi }\square _{b}^{\phi }+\overline{\square }%
_{b}^{\phi }\overline{\square }_{b}^{\phi })f \\ 
= & P_{0}f-n^{2}f_{00}-\langle Pf+\overline{P}f,d_{b}\phi \rangle +\frac{1}{2%
}[\square _{b}^{\phi }\left\langle \overline{\partial }_{b}f,\overline{%
\partial }_{b}\phi \right\rangle +\overline{\square }_{b}^{\phi
}\left\langle \partial _{b}f,\partial _{b}\phi \right\rangle ] \\ 
& +n\sqrt{{\small -1}}[\langle \overline{\partial }_{b}f_{0},\overline{%
\partial }_{b}\phi \rangle -\langle \partial _{b}f_{0},\partial _{b}\phi
\rangle ]+n\func{Re}(Qf-2\sqrt{{\small -1}}A^{\alpha \beta }f_{\alpha }\phi
_{\beta }) \\ 
= & P_{0}^{\phi }f-n^{2}f_{00}+n\func{Re}Q^{\phi }f+n^{2}f_{0}\phi _{0},%
\end{array}%
\end{equation*}%
here we used the equation (\ref{3}) for $P_{0}$%
\begin{equation*}
\begin{array}{c}
\frac{1}{4}(\square _{b}\square _{b}+\overline{\square }_{b}\overline{%
\square }_{b})=\frac{1}{4}(\square _{b}\overline{\square }_{b}+\overline{%
\square }_{b}\square _{b})-n^{2}T^{2}=P_{0}-n^{2}T^{2}+n\func{Re}Q,%
\end{array}%
\end{equation*}%
the definition of $Q^{\phi }f$ and the relation (\ref{4a}) of $P_{0}^{\phi }$
and $P_{0}$.
\end{proof}

We explain why the weighted CR Paneitz operator $P_{0}^{\phi }$ to be
defined in this way. Recall the CR Bochner formula for $\Delta _{b},$ for a
real function $\varphi $, we have%
\begin{equation*}
\begin{array}{lll}
\frac{1}{2}\Delta _{b}|\nabla _{b}\varphi |^{2} & = & |(\nabla
^{H})^{2}\varphi |^{2}+\langle \nabla _{b}\varphi ,\nabla _{b}\Delta
_{b}\varphi \rangle +2\langle J\nabla _{b}\varphi ,\nabla _{b}\varphi
_{0}\rangle \\ 
&  & +[2Ric-(n-2)Tor]((\nabla _{b}\varphi )_{\mathbb{C}},(\nabla _{b}\varphi
)_{\mathbb{C}}),%
\end{array}%
\end{equation*}%
where $(\nabla _{b}\varphi )_{\mathbb{C}}=\varphi ^{\beta }Z_{\beta }$ is
the corresponding complex $(1,0)$-vector field of $\nabla _{b}\varphi $.
Comparing this formula with the Riemannian case, we have the extra term $%
\langle J\nabla _{b}\varphi ,\nabla _{b}\varphi _{0}\rangle $ is hard to
deal. From \cite{cc}, we can relate $\langle J\nabla _{b}\varphi ,\nabla
_{b}\varphi _{0}\rangle $ with $\langle \nabla _{b}\varphi ,\nabla
_{b}\Delta _{b}\varphi \rangle $ by%
\begin{equation*}
\begin{array}{c}
\langle J\nabla _{b}\varphi ,\nabla _{b}\varphi _{0}\rangle =\frac{1}{n}%
\langle \nabla _{b}\varphi ,\nabla _{b}\Delta _{b}\varphi \rangle -\frac{2}{n%
}\langle P\varphi +\overline{P}\varphi ,d_{b}\varphi \rangle -2Tor((\nabla
_{b}\varphi )_{\mathbb{C}},(\nabla _{b}\varphi )_{\mathbb{C}}),%
\end{array}%
\end{equation*}%
then by integral with respect to the volume measure $d\mu =\theta \wedge
\left( d\theta \right) ^{n}$ yields%
\begin{equation}
\begin{array}{c}
n^{2}\int_{M}\varphi _{0}^{2}d\mu =\int_{M}\left( \Delta _{b}\varphi \right)
^{2}d\mu -2\int_{M}\varphi P_{0}\varphi d\mu +2n\int_{M}Tor((\nabla
_{b}\varphi )_{\mathbb{C}},(\nabla _{b}\varphi )_{\mathbb{C}})d\mu .%
\end{array}
\label{5}
\end{equation}%
For our weighted case, by integral of equation (\ref{4}) times $\overline{f}$%
, for a complex function $f$, with respect to the weighted volume measure $%
d\sigma =e^{-\phi }\theta \wedge \left( d\theta \right) ^{n},$ one gets%
\begin{equation*}
\begin{array}{c}
n^{2}\int_{M}\langle f_{0},f_{0}\rangle d\sigma =\frac{1}{4}%
\int_{M}[|\square _{b}^{\phi }f|^{2}+|\overline{\square }_{b}^{\phi
}f|^{2}]d\sigma -\int_{M}\overline{f}P_{0}^{\phi }fd\sigma
+n\int_{M}Tor((\nabla _{b}f)_{\mathbb{C}},(\nabla _{b}f)_{\mathbb{C}%
})d\sigma .%
\end{array}%
\end{equation*}%
This integral have the same type as (\ref{5}).

In the end, we show that if the pseudohermitian torsion of $M$ is zero and $%
\phi _{0}$ vanishes, then the weighted CR Paneitz operator $P_{0}^{\phi }$
is nonnegative. Note that for any complex function $f=\varphi +\sqrt{{\small %
-1}}\psi ,$ we have%
\begin{equation*}
\begin{array}{c}
\int_{M}\overline{f}P_{0}^{\phi }fd\sigma =\int_{M}[\varphi P_{0}^{\phi
}\varphi +\psi P_{0}^{\phi }\psi ]d\sigma .%
\end{array}%
\end{equation*}

\begin{lemma}
\label{nonnegative}Suppose the pseudohermitian torsion of $M$ is zero and $%
\phi _{0}$ vanishes, then the weighted CR Paneitz operator $P_{0}^{\phi }$
is nonnegative. In particular, the weighted CR Paneitz operator $P_{0}^{\phi
}$ is nonnegative in a closed weighted Sasakian $(2n+1)$-manifold with $\phi
_{0}$ vanishes.
\end{lemma}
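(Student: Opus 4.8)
The plan is to specialize the structure identity of \lemref{lemma} to the present hypotheses, integrate by parts, and then use that $T$ commutes with $\square _{b}^{\phi }$ to reduce everything to an eigenvalue inequality for the two nonnegative operators $\square _{b}^{\phi }$ and $\overline{\square }_{b}^{\phi }$. First I would record the two simplifications forced by the hypotheses. Vanishing torsion means $A_{\alpha \beta }=0$, so $Q^{\phi }f=2\sqrt{-1}e^{\phi }(e^{-\phi }A^{\alpha \beta }f_{\alpha })_{,\beta }=0$; and $\phi _{0}=0$ annihilates the term $n^{2}f_{0}\phi _{0}$. Thus (\ref{4}) collapses to $P_{0}^{\phi }f=\frac{1}{4}(\square _{b}^{\phi }\square _{b}^{\phi }+\overline{\square }_{b}^{\phi }\overline{\square }_{b}^{\phi }+4n^{2}T^{2})f$, the identity of Remark~\ref{r1}. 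Since $P_{0}^{\phi }$ is real and self-adjoint, $\int _{M}\overline{f}P_{0}^{\phi }f\,d\sigma =\int _{M}(\varphi P_{0}^{\phi }\varphi +\psi P_{0}^{\phi }\psi )\,d\sigma $ for $f=\varphi +\sqrt{-1}\psi $, so it suffices to control the quadratic form on the left.

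Next I would integrate by parts. The identity $\int _{M}\langle \square _{b}^{\phi }f,g\rangle \,d\sigma =2\int _{M}\langle \overline{\partial }_{b}f,\overline{\partial }_{b}g\rangle \,d\sigma $ shows that $\square _{b}^{\phi }$ (and by conjugation $\overline{\square }_{b}^{\phi }$) is self-adjoint and nonnegative for $d\sigma $, while $\phi _{0}=0$ gives $T(e^{-\phi })=0$ and hence makes $T$ skew-adjoint for $d\sigma $. Applying these to the specialized formula yields
\begin{equation*}
4\int _{M}\overline{f}P_{0}^{\phi }f\,d\sigma =\int _{M}|\square _{b}^{\phi }f|^{2}\,d\sigma +\int _{M}|\overline{\square }_{b}^{\phi }f|^{2}\,d\sigma -4n^{2}\int _{M}|Tf|^{2}\,d\sigma ,
\end{equation*}
so the assertion becomes the spectral inequality $\int _{M}|\square _{b}^{\phi }f|^{2}\,d\sigma +\int _{M}|\overline{\square }_{b}^{\phi }f|^{2}\,d\sigma \ge 4n^{2}\int _{M}|Tf|^{2}\,d\sigma $.

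To establish this I would follow Remark~\ref{r1} and diagonalize. By (\ref{3a}), $Q^{\phi }=0$ and $\phi _{0}=0$ give $[T,\square _{b}^{\phi }]=0$, and by conjugation $[T,\overline{\square }_{b}^{\phi }]=0$. In the model situation where $\square _{b}^{\phi }$ and $\overline{\square }_{b}^{\phi }$ also commute, the three operators share an eigenbasis; on a joint eigenfunction, writing $\lambda ,\mu \ge 0$ for the eigenvalues of $\square _{b}^{\phi },\overline{\square }_{b}^{\phi }$ and using $\square _{b}^{\phi }-\overline{\square }_{b}^{\phi }=2\sqrt{-1}nT$ to fix the $T$-eigenvalue at $\sqrt{-1}(\mu -\lambda )/(2n)$, the eigenvalue of $4P_{0}^{\phi }$ becomes $\lambda ^{2}+\mu ^{2}-(\mu -\lambda )^{2}=2\lambda \mu \ge 0$. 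Summing over the spectrum then gives the inequality, and the Sasakian statement follows as the special case of vanishing torsion.

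The step I expect to be the main obstacle is exactly this diagonalization. In the weighted setting the clean relation $\square _{b}^{\phi }-\overline{\square }_{b}^{\phi }=2\sqrt{-1}nT$ holds only up to the first-order term $2\sqrt{-1}J\nabla _{b}\phi $ coming from the horizontal gradient of $\phi $, so a priori $\square _{b}^{\phi }$ and $\overline{\square }_{b}^{\phi }$ need not commute and the model computation above is not immediately justified. The real content of the proof is to show that this horizontal-gradient term does not destroy the inequality $\int _{M}|\square _{b}^{\phi }f|^{2}\,d\sigma +\int _{M}|\overline{\square }_{b}^{\phi }f|^{2}\,d\sigma \ge 4n^{2}\int _{M}|Tf|^{2}\,d\sigma $ — either by verifying $[\square _{b}^{\phi },\overline{\square }_{b}^{\phi }]=0$ directly under vanishing torsion and $\phi _{0}=0$, or by carrying the extra term through the integration by parts and absorbing the resulting cross integral into the nonnegative squares. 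This is where the two hypotheses, vanishing torsion and $\phi _{0}=0$, must be used decisively.
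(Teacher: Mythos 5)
Your reduction is sound as far as it goes: specializing (\ref{4}) with $A_{\alpha \beta }=0$ and $\phi _{0}=0$, and using the self-adjointness of $\square _{b}^{\phi }$, $\overline{\square }_{b}^{\phi }$ and the skew-adjointness of $T$ with respect to $d\sigma $, correctly gives
\begin{equation*}
4\int_{M}\overline{f}\,P_{0}^{\phi }f\,d\sigma =\int_{M}|\square _{b}^{\phi
}f|^{2}\,d\sigma +\int_{M}|\overline{\square }_{b}^{\phi }f|^{2}\,d\sigma
-4n^{2}\int_{M}|Tf|^{2}\,d\sigma .
\end{equation*}
But the final step --- showing this is nonnegative --- is exactly where your argument stops, and, as you yourself concede, the route you sketch is not justified: it needs both $[\square _{b}^{\phi },\overline{\square }_{b}^{\phi }]=0$ and the identity $\square _{b}^{\phi }-\overline{\square }_{b}^{\phi }=2\sqrt{-1}nT$. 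The second genuinely fails in the weighted setting (the difference picks up the first-order term $2\langle \overline{\partial }_{b}\cdot ,\overline{\partial }_{b}\phi \rangle -2\langle \partial _{b}\cdot ,\partial _{b}\phi \rangle $), and the first is neither verified nor obviously true even with vanishing torsion and $\phi _{0}=0$. Ending with ``either verify the commutator vanishes or absorb the cross terms'' leaves the actual content of the lemma unproved; this is a genuine gap.

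The paper closes this gap by never comparing $\square _{b}^{\phi }$ with $\overline{\square }_{b}^{\phi }$ at all. Instead of pairing $(\square _{b}^{\phi })^{2}+(\overline{\square }_{b}^{\phi })^{2}$ against all of $4n^{2}T^{2}$, split $4n^{2}T^{2}$ into two halves and pair one half with each square. Since (\ref{3a}) with $Q^{\phi }=0$ and $\phi _{0}=0$ gives $[\square _{b}^{\phi },T]=0$ (an observation you already have), one gets the exact factorizations
\begin{align*}
(\square _{b}^{\phi })^{2}+2n^{2}T^{2}& =(\square _{b}^{\phi }-\sqrt{-2}
\,nT)(\square _{b}^{\phi }+\sqrt{-2}\,nT), \\
(\overline{\square }_{b}^{\phi })^{2}+2n^{2}T^{2}& =(\overline{\square }
_{b}^{\phi }+\sqrt{-2}\,nT)(\overline{\square }_{b}^{\phi }-\sqrt{-2}\,nT),
\end{align*}
so that $4P_{0}^{\phi }$ is the sum of these two products. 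In each product the two factors commute with each other (only $[\square _{b}^{\phi },T]=0$ is used), are self-adjoint for $d\sigma $ (because $\phi _{0}=0$ makes $T$ skew-adjoint, hence $\sqrt{-2}\,nT$ self-adjoint), and are nonnegative: integration by parts yields
\begin{equation*}
\int_{M}\langle \varphi ,(\square _{b}^{\phi }-\sqrt{-2}\,nT)\varphi \rangle
\,d\sigma =\int_{M}|\overline{\partial }_{b}\varphi |^{2}\,d\sigma \geq 0
\end{equation*}
once $\phi _{0}=0$ kills the $\varphi ^{2}\phi _{0}$ term. Commuting nonnegative self-adjoint operators are simultaneously diagonalizable on the relevant finite-dimensional eigenspaces, so each product has nonnegative spectrum, and $P_{0}^{\phi }\geq 0$ follows. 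This grouping is precisely what your ``model computation'' misses: your pairing forces you to relate the holomorphic and antiholomorphic Laplacians to each other, whereas the paper's pairing only requires each of them to commute with $T$ --- which is the one commutation the hypotheses actually provide.
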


\begin{proof}
The zero pseudohermitian torsion and $\phi _{0}=0$ implies the weighted CR
Paneitz operator 
\begin{equation*}
\begin{array}{lll}
4P_{0}^{\phi } & = & \square _{b}^{\phi }\square _{b}^{\phi }+\overline{%
\square }_{b}^{\phi }\overline{\square }_{b}^{\phi }+4n^{2}T^{2} \\ 
& = & (\square _{b}^{\phi }-\sqrt{{\small -2}}nT)(\square _{b}^{\phi }+\sqrt{%
{\small -2}}nT)+(\overline{\square }_{b}^{\phi }+\sqrt{{\small -2}}nT)(%
\overline{\square }_{b}^{\phi }-\sqrt{{\small -2}}nT),%
\end{array}%
\end{equation*}%
and $\square _{b}^{\phi }-\sqrt{{\small -2}}nT$ and $\square _{b}^{\phi }+%
\sqrt{{\small -2}}nT$ are commute, since $[\square _{b}^{\phi },T]=0$, so
they are diagonalized simultaneously on the finite dimensional eigenspace of 
$\square _{b}^{\phi }+\sqrt{{\small -2}}nT$ with respect to any nonzero
eigenvalue. And we know that the eigenvalues of $\square _{b}^{\phi }-\sqrt{%
{\small -2}}nT$ (and thus of $\square _{b}^{\phi }+\sqrt{{\small -2}}nT$)
are all nonnegative, since for any real function $\varphi $ 
\begin{equation*}
\begin{array}{lll}
\int_{M}\langle \varphi ,(\square _{b}^{\phi }-\sqrt{{\small -2}}nT)\varphi
\rangle d\sigma & = & \int_{M}\varphi (\overline{\square }_{b}^{\phi
}\varphi +\sqrt{{\small -2}}n\varphi _{0})d\sigma \\ 
& = & \int_{M}[|\overline{\partial }_{b}\varphi |^{2}+\frac{1}{\sqrt{-2}}%
n\varphi ^{2}\phi _{0}]d\sigma \\ 
& = & \int_{M}|\overline{\partial }_{b}\varphi |^{2}d\sigma ,%
\end{array}%
\end{equation*}%
here we used the condition $\phi _{0}$ vanishes on $M$. Therefore, $%
P_{0}^{\phi }$ is nonnegative.
\end{proof}

\section{The weighted CR Reilly Formula}

In this section, we derive the weighted CR Reilly formula for the weighted
Kohn Laplacian on a closed strictly pseudoconvex CR $(2n+1)$-manifold $M$
with a given smooth real weighted function $\phi $.

First we recall the Bochner formula for the Kohn Laplacian (equation (2.8)
in \cite{ccy}).

\begin{proposition}
For any complex-valued function $f$, we have%
\begin{equation*}
\begin{array}{lll}
-\frac{1}{2}\square _{b}|\overline{\partial }_{b}f|^{2} & = & \tsum_{\alpha
,\beta }(f_{\overline{\alpha }\overline{\beta }}\overline{f}^{\overline{%
\alpha }\overline{\beta }}+f_{\overline{\alpha }\beta }\overline{f}^{%
\overline{\alpha }\beta })+Ric((\nabla _{b}f)_{\mathbb{C}},(\nabla _{b}f)_{%
\mathbb{C}}) \\ 
&  & -\frac{1}{2n}\langle \overline{\partial }_{b}f,\overline{\partial }%
_{b}\square _{b}f\rangle -\frac{n+1}{2n}\langle \overline{\partial }%
_{b}\square _{b}f,\overline{\partial }_{b}f\rangle  \\ 
&  & -\frac{1}{n}\langle \overline{P}f,\overline{\partial }_{b}f\rangle +%
\frac{n-1}{n}\langle P\overline{f},\partial _{b}\overline{f}\rangle ,%
\end{array}%
\end{equation*}%
where $(\nabla _{b}f)_{\mathbb{C}}=f^{\alpha }Z_{\alpha }$ is the
corresponding complex (1,0)-vector filed of $\nabla _{b}f.$
\end{proposition}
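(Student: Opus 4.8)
The plan is to derive the identity by a direct Bochner-type computation: expand $-\frac{1}{2}\square_b|\overline{\partial}_b f|^2$ with the Leibniz rule and then convert the resulting third-order terms into first-order ones by means of the pseudohermitian commutation relations, which is exactly where the Webster--Ricci curvature, the torsion, and the operators $P,\overline{P}$ get produced. First I would work in a unitary frame (so $h_{\alpha\overline{\beta}}=\delta_{\alpha\beta}$), write $|\overline{\partial}_b f|^2=\sum_\alpha f_{\overline{\alpha}}\overline{f}_{\alpha}$, and use $\square_b u=-2u_{\overline{\gamma}}{}^{\overline{\gamma}}$, so that $-\frac{1}{2}\square_b|\overline{\partial}_b f|^2=(f_{\overline{\alpha}}\overline{f}_{\alpha})_{\overline{\gamma}}{}^{\overline{\gamma}}$. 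Applying Leibniz twice produces four terms:
\begin{equation*}
(f_{\overline{\alpha}}\overline{f}_{\alpha})_{\overline{\gamma}}{}^{\overline{\gamma}}=f_{\overline{\alpha}\overline{\gamma}}{}^{\overline{\gamma}}\overline{f}_{\alpha}+f_{\overline{\alpha}}\overline{f}_{\alpha\overline{\gamma}}{}^{\overline{\gamma}}+f_{\overline{\alpha}\overline{\gamma}}\overline{f}_{\alpha}{}^{\overline{\gamma}}+f_{\overline{\alpha}}{}^{\overline{\gamma}}\overline{f}_{\alpha\overline{\gamma}}.
\end{equation*}
The last two terms are already the ``good'' squared-Hessian terms: raising indices turns them into $\sum_{\alpha,\beta}(f_{\overline{\alpha}\overline{\beta}}\overline{f}^{\overline{\alpha}\overline{\beta}}+f_{\overline{\alpha}\beta}\overline{f}^{\overline{\alpha}\beta})$, matching the right-hand side with no further work. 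Hence the whole content of the formula lies in rewriting the two third-order terms $f_{\overline{\alpha}\overline{\gamma}}{}^{\overline{\gamma}}\overline{f}_\alpha$ and its conjugate-type partner $f_{\overline{\alpha}}\overline{f}_{\alpha\overline{\gamma}}{}^{\overline{\gamma}}$.

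Second, I would reduce $f_{\overline{\alpha}\overline{\gamma}}{}^{\overline{\gamma}}$ to a derivative of $\square_b f$ modulo curvature. Since the two antiholomorphic indices of a scalar commute, $f_{\overline{\alpha}\overline{\gamma}}=f_{\overline{\gamma}\overline{\alpha}}$, and I then commute the contracted holomorphic derivative $\nabla_\gamma$ past $\nabla_{\overline{\alpha}}$ acting on the covector $f_{\overline{\gamma}}$. The mixed commutator $[\nabla_\gamma,\nabla_{\overline{\alpha}}]$ applied to $f_{\overline{\gamma}}$ generates (i) a $T$-derivative contribution proportional to $h_{\gamma\overline{\alpha}}f_{\overline{\gamma}0}$ coming from the structure equation $d\theta=ih_{\alpha\overline{\beta}}\theta^\alpha\wedge\theta^{\overline{\beta}}$, and (ii) the Webster curvature $R_{\overline{\gamma}}{}^{\overline{\delta}}{}_{\gamma\overline{\alpha}}f_{\overline{\delta}}$; contracting over $\gamma$ converts (ii) into the Ricci term $Ric((\nabla_b f)_{\mathbb{C}},(\nabla_b f)_{\mathbb{C}})$. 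The contracted leading piece $f_{\overline{\gamma}\gamma\overline{\alpha}}=-\frac{1}{2}(\square_b f)_{\overline{\alpha}}$ supplies the $\langle\overline{\partial}_b\square_b f,\overline{\partial}_b f\rangle$ contributions, while the leftover $T$-derivative pieces $f_{\overline{\gamma}0}$, after one further use of the commutator $[\nabla_0,\nabla_{\overline{\gamma}}]$ (which carries the torsion $A_{\alpha\beta}$), recombine into the Paneitz components via the definition $P_\beta f=f_{\overline{\gamma}}{}^{\overline{\gamma}}{}_\beta+\sqrt{-1}nA_{\beta\gamma}f^\gamma$. The conjugate-type term $f_{\overline{\alpha}}\overline{f}_{\alpha\overline{\gamma}}{}^{\overline{\gamma}}$ is handled symmetrically and yields the $\frac{n-1}{n}\langle P\overline{f},\partial_b\overline{f}\rangle$ term together with the complementary inner product.

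The main obstacle is the bookkeeping in this second step: one must apply the third-order commutation identities for a covector (not merely for a scalar), track every $f_0$ and $A_{\alpha\beta}$ term, and verify that the contractions assemble into exactly the asymmetric coefficients $-\frac{1}{2n}$ and $-\frac{n+1}{2n}$ on $\langle\overline{\partial}_b f,\overline{\partial}_b\square_b f\rangle$ and $\langle\overline{\partial}_b\square_b f,\overline{\partial}_b f\rangle$, together with $-\frac{1}{n}$ and $\frac{n-1}{n}$ on the $\overline{P},P$ terms. This asymmetry is genuine --- it records which index was commuted first --- so no symmetrization shortcut is available, and the factors of $n$ appear only after the index $\gamma$ is summed against the $n$-dimensional $T_{1,0}$; confirming these precise rational constants against the commutation relations is the delicate endgame.
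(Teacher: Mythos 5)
The paper offers no proof of this proposition at all: it is recalled verbatim as equation (2.8) of \cite{ccy}, so there is nothing in the paper to compare your argument against line by line. What you have sketched is, in substance, the standard derivation behind that cited formula, and its skeleton is sound: with $\square_b u=-2u_{\overline{\gamma}}{}^{\overline{\gamma}}$ the Leibniz expansion gives exactly your four terms, the two cross terms are indeed the squared Hessians $\sum_{\alpha,\beta}(f_{\overline{\alpha}\overline{\beta}}\overline{f}^{\overline{\alpha}\overline{\beta}}+f_{\overline{\alpha}\beta}\overline{f}^{\overline{\alpha}\beta})$, and the two genuinely third-order terms must be processed by the scalar and covector commutation relations, which is precisely where the Ricci, torsion, $T$-derivative and Paneitz contributions enter, with $\sum_\gamma f_{\overline{\gamma}\gamma\overline{\alpha}}=-\tfrac12(\square_b f)_{\overline{\alpha}}$ as you say.

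Two concrete points in your deferred endgame need repair before this closes up. First, the conjugate-type term $f_{\overline{\alpha}}\,\overline{f}_{\alpha\overline{\gamma}}{}^{\overline{\gamma}}$ does not "symmetrically" produce derivatives of $\square_b f$: after commuting, its leading piece is $\sum_\gamma \overline{f}_{\overline{\gamma}\gamma\alpha}=-\tfrac12(\square_b\overline{f})_{\alpha}$, i.e.\ a derivative of $\square_b\overline{f}=\overline{\overline{\square}_b f}$, whereas the stated right-hand side contains only $\square_b f$. You must convert $\overline{\square}_b f$ into $\square_b f$ using $\square_b-\overline{\square}_b=2\sqrt{-1}\,nT$ and then absorb the resulting $f_0$-terms, together with the $f_{\overline{\alpha}0}$-terms from the mixed commutators (after one more commutation past $T$, which brings in $A_{\alpha\beta}$), into the components $P_\beta f$ and $\overline{P}_{\overline{\beta}}f$; this step is absent from your outline and is exactly what generates both $P$ and $\overline{P}$ with different weights. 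Second, your accounting of the $n$-dependence is off: contractions like $\sum_\gamma h_{\gamma\overline{\alpha}}f_{\overline{\gamma}0}=f_{\overline{\alpha}0}$ produce a single term, not $n$ of them, and the factors $\tfrac1n$ actually arise from eliminating $f_0$ via the trace identity $\sqrt{-1}\,nf_0=f_{\gamma}{}^{\gamma}-f_{\overline{\gamma}}{}^{\overline{\gamma}}$; this elimination, not "which index was commuted first," is the true source of the asymmetric constants $-\tfrac{1}{2n}$, $-\tfrac{n+1}{2n}$, $-\tfrac1n$, $\tfrac{n-1}{n}$. With these corrections your plan is the right one, but as written it is a strategy rather than a proof, since the coefficients — which are the entire content of the identity — remain unverified.
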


Now we have the following CR Bochner formula for $\square _{b}^{\phi }$.

\begin{lemma}
For any complex-valued function $f$, we have%
\begin{equation}
\begin{array}{lll}
-\frac{1}{2}\square _{b}^{\phi }|\overline{\partial }_{b}f|^{2} & = & 
\tsum_{\alpha ,\beta }[(f_{\overline{\alpha }\overline{\beta }}+f_{\overline{%
\alpha }}\phi _{\overline{\beta }})\overline{f}^{\overline{\alpha }\overline{%
\beta }}+(f_{\overline{\alpha }\beta }-f_{\overline{\alpha }}\phi _{\beta })%
\overline{f}^{\overline{\alpha }\beta }] \\ 
&  & +Ric(\square _{b}^{\phi })((\nabla _{b}f)_{\mathbb{C}},(\nabla _{b}f)_{%
\mathbb{C}})-\frac{1}{n}\langle \overline{P}^{\phi }f,\overline{\partial }%
_{b}f\rangle \\ 
&  & -\frac{1}{2n}\langle \overline{\partial }_{b}f,\overline{\partial }%
_{b}\square _{b}^{\phi }f\rangle -\frac{n+1}{2n}\langle \overline{\partial }%
_{b}\square _{b}^{\phi }f,\overline{\partial }_{b}f\rangle +\frac{n-1}{n}%
\langle P^{\phi }\overline{f},\partial _{b}\overline{f}\rangle \\ 
&  & -\sqrt{{\small -1}}[(n-1)\overline{f}_{0}\langle \overline{\partial }%
_{b}f,\overline{\partial }_{b}\phi \rangle +f_{0}\langle \overline{\partial }%
_{b}\phi ,\overline{\partial }_{b}f\rangle ].%
\end{array}
\label{9}
\end{equation}
\end{lemma}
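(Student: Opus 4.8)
The plan is to derive (\ref{9}) from the unweighted Bochner formula in the preceding Proposition by substituting the defining relation $\square_b^\phi g=\square_b g+2\langle\overline{\partial}_b g,\overline{\partial}_b\phi\rangle$, valid for any function $g$, and then rewriting every unweighted object on the right-hand side in terms of its weighted counterpart. Applying this relation to the scalar $g=|\overline{\partial}_b f|^2$ gives
\begin{equation*}
-\tfrac12\square_b^\phi|\overline{\partial}_b f|^2=-\tfrac12\square_b|\overline{\partial}_b f|^2-\langle\overline{\partial}_b|\overline{\partial}_b f|^2,\overline{\partial}_b\phi\rangle,
\end{equation*}
so that the entire content of the Lemma is the reorganization of the unweighted right-hand side, together with the single correction term $-\langle\overline{\partial}_b|\overline{\partial}_b f|^2,\overline{\partial}_b\phi\rangle$, into weighted shape. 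First I would expand this correction by the Leibniz rule, producing terms of the form $f_{\overline{\alpha}\overline{\beta}}\,\overline{f}^{\overline{\alpha}}\phi_\beta$ and $f_{\overline{\alpha}}\,\overline{f}^{\overline{\alpha}\overline{\beta}}\phi_\beta$ together with their conjugate analogues; these are precisely the ingredients that shift the quadratic sum $\sum_{\alpha,\beta}(f_{\overline{\alpha}\overline{\beta}}\overline{f}^{\overline{\alpha}\overline{\beta}}+f_{\overline{\alpha}\beta}\overline{f}^{\overline{\alpha}\beta})$ into the modified sum $\sum_{\alpha,\beta}[(f_{\overline{\alpha}\overline{\beta}}+f_{\overline{\alpha}}\phi_{\overline{\beta}})\overline{f}^{\overline{\alpha}\overline{\beta}}+(f_{\overline{\alpha}\beta}-f_{\overline{\alpha}}\phi_\beta)\overline{f}^{\overline{\alpha}\beta}]$ appearing in (\ref{9}).

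Next I would substitute, one at a time, the three weighted replacements on the unweighted right-hand side. In the two Kohn-Laplacian inner products I use $\square_b f=\square_b^\phi f-2\langle\overline{\partial}_b f,\overline{\partial}_b\phi\rangle$, so that $\langle\overline{\partial}_b\square_b f,\overline{\partial}_b f\rangle$ and $\langle\overline{\partial}_b f,\overline{\partial}_b\square_b f\rangle$ become their $\square_b^\phi$ versions at the cost of terms involving $\overline{\partial}_b\langle\overline{\partial}_b f,\overline{\partial}_b\phi\rangle$; expanding the latter by Leibniz produces both second-derivative contractions and the mixed Hessian $\phi_{\alpha\overline{\beta}}$. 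For the Paneitz pieces I insert the definition $P_\beta^\phi f=P_\beta f-\partial_{Z_\beta}\langle\partial_b f,\partial_b\phi\rangle+n\sqrt{-1}f_0\phi_\beta$ and its conjugate, which turns $\langle\overline{P}f,\overline{\partial}_b f\rangle$ and $\langle P\overline{f},\partial_b\overline{f}\rangle$ into the weighted $\langle\overline{P}^\phi f,\overline{\partial}_b f\rangle$ and $\langle P^\phi\overline{f},\partial_b\overline{f}\rangle$ plus $\phi$-corrections; in particular the transverse term $n\sqrt{-1}f_0\phi_\beta$ in $P^\phi$, weighted by the coefficients $\tfrac{n-1}{n}$ and $-\tfrac1n$, is what produces the final line $-\sqrt{-1}[(n-1)\overline{f}_0\langle\overline{\partial}_b f,\overline{\partial}_b\phi\rangle+f_0\langle\overline{\partial}_b\phi,\overline{\partial}_b f\rangle]$. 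Finally I replace $Ric$ by $Ric(\square_b^\phi)$ using its definition (\ref{A}), which absorbs the remaining Hessian contributions of type $2\operatorname{Re}(\phi_{\alpha\overline{\beta}}f^\alpha\overline{f}^{\overline{\beta}})$.

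The bulk of the work is then the collection step: after these substitutions one gathers all terms by type and checks that they cancel or combine into the groups displayed in (\ref{9}). The genuinely delicate bookkeeping is twofold. First, the mixed Hessian $\phi_{\alpha\overline{\beta}}$ enters from three independent sources — the Leibniz expansion of $\overline{\partial}_b\langle\overline{\partial}_b f,\overline{\partial}_b\phi\rangle$, the derivative $\partial_{Z_\beta}\langle\partial_b f,\partial_b\phi\rangle$ inside $P^\phi$, and the correction term — and these must be reconciled so that exactly the combination required by $Ric(\square_b^\phi)$ survives while the remainder rebuilds the shifted quadratic sum. Second, the transverse derivatives $f_0,\overline{f}_0$ must be tracked through the commutation of covariant derivatives; here I expect to use the commutator $[Z_\alpha,Z_{\overline{\beta}}]=\sqrt{-1}h_{\alpha\overline{\beta}}T+\cdots$, which is what forces the asymmetric coefficients $(n-1)$ and $1$ in the last bracket. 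I expect this $f_0$-bookkeeping, rather than any conceptual difficulty, to be the main obstacle, since it is where the sign and index conventions are most easily mishandled.
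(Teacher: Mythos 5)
Your proposal is correct and follows essentially the same route as the paper: the paper's own proof is exactly the substitute-and-collect argument you describe, compressing all the Leibniz corrections (from the left-hand side, the two Kohn terms, and the Paneitz terms) into one displayed identity, namely $\langle \overline{\partial }_{b}f,\overline{\partial }_{b}\langle \overline{\partial }_{b}f,\overline{\partial }_{b}\phi \rangle \rangle +\langle \overline{\partial }_{b}\langle \overline{\partial }_{b}f,\overline{\partial }_{b}\phi \rangle ,\overline{\partial }_{b}f\rangle -\langle \overline{\partial }_{b}|\overline{\partial }_{b}f|^{2},\overline{\partial }_{b}\phi \rangle =\overline{f}_{\alpha \beta }f^{\alpha }\phi ^{\beta }-\overline{f}_{\alpha \overline{\beta }}f^{\alpha }\phi ^{\overline{\beta }}+(\phi _{\alpha \overline{\beta }}+\phi _{\overline{\beta }\alpha })f^{\alpha }\overline{f}^{\overline{\beta }}$, which encodes precisely your quadratic-sum shift and the $Ric(\square _{b}^{\phi })$ absorption. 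One small correction: the asymmetric coefficients $(n-1)$ and $1$ in the transverse bracket come directly from the weights $\tfrac{n-1}{n}$ and $-\tfrac{1}{n}$ on the Paneitz terms multiplying $n\sqrt{-1}f_{0}\phi _{\beta }$ (as you yourself note earlier), not from the commutator $[Z_{\alpha },Z_{\overline{\beta }}]$.
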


The proof of the above formula follows from the definitions of $\square
_{b}^{\phi }$ and $P^{\phi }$, and the identity 
\begin{equation*}
\begin{array}{l}
\langle \overline{\partial }_{b}f,\overline{\partial }_{b}\langle \overline{%
\partial }_{b}f,\overline{\partial }_{b}\phi \rangle \rangle +\langle 
\overline{\partial }_{b}\langle \overline{\partial }_{b}f,\overline{\partial 
}_{b}\phi \rangle ,\overline{\partial }_{b}f\rangle -\langle \overline{%
\partial }_{b}|\overline{\partial }_{b}f|^{2},\overline{\partial }_{b}\phi
\rangle \\ 
=\overline{f}_{\alpha \beta }f^{\alpha }\phi ^{\beta }-\overline{f}_{\alpha 
\overline{\beta }}f^{\alpha }\phi ^{\overline{\beta }}+(\phi _{\alpha 
\overline{\beta }}+\phi _{\overline{\beta }\alpha })f^{\alpha }\overline{f}^{%
\overline{\beta }}.%
\end{array}%
\end{equation*}%
Note that by integrate both sides of the above equation with the weighted
volume measure $d\sigma $, one gets%
\begin{equation}
\begin{array}{ll}
& \int_{M}[\overline{f}_{\alpha \beta }f^{\alpha }\phi ^{\beta }-\overline{f}%
_{\beta \overline{\alpha }}f^{\beta }\phi ^{\overline{\alpha }}+(\phi
_{\alpha \overline{\beta }}+\phi _{\overline{\beta }\alpha })f^{\alpha }%
\overline{f}^{\overline{\beta }}]d\sigma \\ 
= & \frac{1}{2}\int_{M}[\langle \square _{b}^{\phi }f,\langle \overline{%
\partial }_{b}f,\overline{\partial }_{b}\phi \rangle \rangle +\langle
\langle \overline{\partial }_{b}f,\overline{\partial }_{b}\phi \rangle
,\square _{b}^{\phi }f\rangle -|\overline{\partial }_{b}f|^{2}\overline{%
\square }_{b}^{\phi }\phi ]d\sigma .%
\end{array}
\label{10}
\end{equation}%
Also note that 
\begin{equation*}
\begin{array}{c}
\langle \overline{\partial }_{b}f,\overline{\partial }_{b}\langle \overline{%
\partial }_{b}f,\overline{\partial }_{b}\phi \rangle \rangle +\langle 
\overline{\partial }_{b}\langle \overline{\partial }_{b}f,\overline{\partial 
}_{b}\phi \rangle ,\overline{\partial }_{b}f\rangle =f_{\overline{\alpha }%
\overline{\beta }}\overline{f}^{\overline{\alpha }}\phi ^{\overline{\beta }}+%
\overline{f}_{\alpha \beta }f^{\alpha }\phi ^{\beta }+(\phi _{\alpha 
\overline{\beta }}+\phi _{\overline{\beta }\alpha })f^{\alpha }\overline{f}^{%
\overline{\beta }}%
\end{array}%
\end{equation*}%
and%
\begin{equation*}
\begin{array}{c}
\langle \overline{\partial }_{b}\langle \partial _{b}f,\partial _{b}\phi
\rangle ,\overline{\partial }_{b}f\rangle -\langle \overline{\partial }%
_{b}\phi ,\overline{\partial }_{b}|\overline{\partial }_{b}f|^{2}\rangle
=\phi _{\overline{\alpha }\overline{\beta }}\overline{f}^{\overline{\alpha }%
}f^{\overline{\beta }}-\overline{f}_{\alpha \beta }f^{\alpha }\phi ^{\beta }+%
\sqrt{{\small -1}}f_{0}\langle \overline{\partial }_{b}\phi ,\overline{%
\partial }_{b}f\rangle ,%
\end{array}%
\end{equation*}%
then integral yields%
\begin{equation}
\begin{array}{ll}
& \int_{M}[f_{\overline{\alpha }\overline{\beta }}\overline{f}^{\overline{%
\alpha }}\phi ^{\overline{\beta }}+\overline{f}_{\alpha \beta }f^{\alpha
}\phi ^{\beta }+(\phi _{\alpha \overline{\beta }}+\phi _{\overline{\beta }%
\alpha })f^{\alpha }\overline{f}^{\overline{\beta }}]d\sigma \\ 
= & \frac{1}{2}\int_{M}[\langle \square _{b}^{\phi }f,\langle \overline{%
\partial }_{b}f,\overline{\partial }_{b}\phi \rangle \rangle +\langle
\langle \overline{\partial }_{b}f,\overline{\partial }_{b}\phi \rangle
,\square _{b}^{\phi }f\rangle ]d\sigma%
\end{array}
\label{10a}
\end{equation}%
and 
\begin{equation}
\begin{array}{ll}
& \sqrt{{\small -1}}\int_{M}f_{0}\langle \overline{\partial }_{b}\phi ,%
\overline{\partial }_{b}f\rangle d\sigma +\int_{M}[\phi _{\overline{\alpha }%
\overline{\beta }}\overline{f}^{\overline{\alpha }}f^{\overline{\beta }}-%
\overline{f}_{\alpha \beta }f^{\alpha }\phi ^{\beta }]d\sigma \\ 
= & \frac{1}{2}\int_{M}[\langle \langle \partial _{b}f,\partial _{b}\phi
\rangle ,\square _{b}^{\phi }f\rangle -|\overline{\partial }%
_{b}f|^{2}\square _{b}^{\phi }\phi ]d\sigma .%
\end{array}
\label{10b}
\end{equation}

\textbf{The Proof of Theorem}\textup{\textbf{\ \ref{Reilly'sformula}:}}

\begin{proof}
By integrating the CR Bochner formula (\ref{9}) for $\square _{b}^{\phi }$
with respect to the weighted volume measure $d\sigma $ and from (\ref{10}), (%
\ref{10a}) and (\ref{10b}), we have%
\begin{equation*}
\begin{array}{lll}
0 & = & \int_{M}(f_{\overline{\alpha }\overline{\beta }}\overline{f}^{%
\overline{\alpha }\overline{\beta }}+f_{\overline{\alpha }\beta }\overline{f}%
^{\overline{\alpha }\beta })d\sigma -\frac{n+2}{4n}\int_{M}\langle \square
_{b}^{\phi }f,\square _{b}^{\phi }f\rangle d\sigma +\frac{2-n}{2n}%
\int_{M}fP_{0}^{\phi }\overline{f}d\sigma \\ 
&  & +\int_{M}Ric((\nabla _{b}f)_{\mathbb{C}},(\nabla _{b}f)_{\mathbb{C}%
})d\sigma -\frac{1}{2}\int_{M}|\overline{\partial }_{b}f|^{2}[n\overline{%
\square }_{b}^{\phi }\phi -\square _{b}^{\phi }\phi ]d\sigma \\ 
&  & +\int_{M}[(n-1)(f_{\overline{\alpha }\overline{\beta }}\overline{f}^{%
\overline{\alpha }}\phi ^{\overline{\beta }}-\phi _{\alpha \beta }\overline{f%
}^{\alpha }f^{\beta })+\phi _{\overline{\alpha }\overline{\beta }}\overline{f%
}^{\overline{\alpha }}f^{\overline{\beta }}-\overline{f}_{\alpha \beta
}f^{\alpha }\phi ^{\beta }]d\sigma \\ 
&  & +\frac{1}{2}\int_{M}[(n-1)\langle \square _{b}^{\phi }f,\langle
\partial _{b}f,\partial _{b}\phi \rangle \rangle -\langle \langle \partial
_{b}f,\partial _{b}\phi \rangle ,\square _{b}^{\phi }f\rangle ]d\sigma \\ 
&  & +\frac{1}{2}\int_{M}[\langle \overline{\partial }_{b}f,\overline{P}%
^{\phi }f\rangle -\langle \overline{P}^{\phi }f,\overline{\partial }%
_{b}f\rangle +2\func{Re}\langle \square _{b}^{\phi }f,\langle \overline{%
\partial }_{b}f,\overline{\partial }_{b}\phi \rangle \rangle ]d\sigma .%
\end{array}%
\end{equation*}%
By taking the real part of above equation yields%
\begin{equation}
\begin{array}{ll}
& \frac{n+2}{4n}\int_{M}\langle \square _{b}^{\phi }f,\square _{b}^{\phi
}f\rangle d\sigma \\ 
= & \int_{M}\tsum_{\alpha ,\beta }[|f_{\overline{\alpha }\overline{\beta }}+%
\frac{n-2}{2}f_{\overline{\alpha }}\phi _{\overline{\beta }}|^{2}+f_{%
\overline{\alpha }\beta }\overline{f}^{\overline{\alpha }\beta }]d\sigma \\ 
& +\int_{M}[Ric-\frac{n-2}{2}Tor(\square _{b}^{\phi })]((\nabla _{b}f)_{%
\mathbb{C}},(\nabla _{b}f)_{\mathbb{C}})d\sigma \\ 
& +\frac{2-n}{2n}\int_{M}fP_{0}^{\phi }\overline{f}d\sigma -\int_{M}|%
\overline{\partial }_{b}f|^{2}[\frac{n-1}{2}\Delta _{b}\phi +\frac{n^{2}}{4}|%
\overline{\partial }_{b}\phi |^{2}]d\sigma \\ 
& +\frac{1}{2}\int_{M}\func{Re}\langle \square _{b}^{\phi }f,(n-2)\langle
\partial _{b}f,\partial _{b}\phi \rangle +2\langle \overline{\partial }_{b}f,%
\overline{\partial }_{b}\phi \rangle \rangle d\sigma .%
\end{array}
\label{11}
\end{equation}%
In the following we deal with the term $\int_{M}f_{\overline{\alpha }\beta }%
\overline{f}^{\overline{\alpha }\beta }d\sigma .$ First by decomposing $%
\overline{f}_{\alpha \overline{\beta }}$ into trace-free part $\overline{f}%
_{\alpha \overline{\beta }}-\frac{1}{n}\overline{f}_{\gamma }{}^{\gamma
}h_{\alpha \overline{\beta }}$ and trace part $\frac{1}{n}\overline{f}%
_{\gamma }{}^{\gamma }h_{\alpha \overline{\beta }},$ we have%
\begin{equation}
\begin{array}{lll}
f_{\overline{\alpha }\beta }\overline{f}^{\overline{\alpha }\beta } & = & 
\tsum_{\alpha ,\beta }|\overline{f}_{\alpha \overline{\beta }}-\frac{1}{n}%
\overline{f}_{\gamma }{}^{\gamma }h_{\alpha \overline{\beta }}|^{2}+\frac{1}{%
4n}\langle \square _{b}f,\square _{b}f\rangle \\ 
& = & \tsum_{\alpha ,\beta }|\overline{f}_{\alpha \overline{\beta }}-\frac{1%
}{n}\overline{f}_{\gamma }{}^{\gamma }h_{\alpha \overline{\beta }}|^{2}+%
\frac{1}{4n}\langle \square _{b}^{\phi }f,\square _{b}^{\phi }f\rangle \\ 
&  & +\frac{1}{n}[|\langle \overline{\partial }_{b}f,\overline{\partial }%
_{b}\phi \rangle |^{2}-\func{Re}\langle \square _{b}^{\phi }f,\langle 
\overline{\partial }_{b}f,\overline{\partial }_{b}\phi \rangle \rangle ].%
\end{array}
\label{12}
\end{equation}%
The divergence formula for the trace-free part of $\overline{f}_{\alpha 
\overline{\beta }}$: 
\begin{equation*}
\begin{array}{c}
B_{\alpha \overline{\beta }}\overline{f}=\overline{f}_{\alpha \overline{%
\beta }}-\frac{1}{n}\overline{f}_{\gamma }{}^{\gamma }h_{\alpha \overline{%
\beta }},%
\end{array}%
\end{equation*}%
is given by 
\begin{equation*}
\begin{array}{lll}
(B^{\alpha \overline{\beta }}f)(B_{\alpha \overline{\beta }}\overline{f}) & =
& (B^{\alpha \overline{\beta }}\overline{f})(B_{\alpha \overline{\beta }}f)=%
\overline{f}^{\alpha \overline{\beta }}(B_{\alpha \overline{\beta }}f)=(%
\overline{f}^{\alpha }B_{\alpha \overline{\beta }}f)^{,\overline{\beta }}-%
\frac{n-1}{n}\overline{f}^{\alpha }P_{\alpha }f \\ 
& = & (\overline{f}^{\alpha }B_{\alpha \overline{\beta }}f)^{,\overline{%
\beta }}-\frac{n-1}{n}(\overline{f}P_{\alpha }f)^{,\alpha }+\frac{n-1}{2n}%
\overline{f}(P_{0}f) \\ 
& = & e^{\phi }[(e^{-\phi }\overline{f}^{\alpha }B_{\alpha \overline{\beta }}%
\overline{f})^{,\overline{\beta }}-\frac{n-1}{n}(e^{-\phi }\overline{f}%
P_{\alpha }f)^{,\alpha }] \\ 
&  & +\frac{n-1}{2n}\overline{f}(P_{0}f-\langle Pf+\overline{P}f,d_{b}\phi
\rangle )+\frac{1}{2}B_{\alpha \overline{\beta }}f(\overline{f}^{\alpha
}\phi ^{\overline{\beta }}+\phi ^{\alpha }\overline{f}^{\overline{\beta }}),%
\end{array}%
\end{equation*}%
since $(B^{\alpha \overline{\beta }}f)(B_{\alpha \overline{\beta }}\overline{%
f})$ is invariant under conjugate as well as when replace $f$ by $\overline{f%
}$. Since%
\begin{equation*}
\begin{array}{ll}
& B_{\alpha \overline{\beta }}f(\overline{f}^{\alpha }\phi ^{\overline{\beta 
}}+\phi ^{\alpha }\overline{f}^{\overline{\beta }}) \\ 
= & \langle \overline{\partial }_{b}|\partial _{b}f|^{2},\overline{\partial }%
_{b}\phi \rangle +\langle \partial _{b}|\overline{\partial }%
_{b}f|^{2},\partial _{b}\phi \rangle -\overline{f}_{\overline{\alpha }%
\overline{\beta }}f^{\overline{\alpha }}\phi ^{\overline{\beta }}-\overline{f%
}_{\alpha \beta }f^{\alpha }\phi ^{\beta } \\ 
& +\frac{1}{2n}\langle \overline{\square }_{b}^{\phi }f,\langle \overline{%
\partial }_{b}f,\overline{\partial }_{b}\phi \rangle +\langle \partial
_{b}f,\partial _{b}\phi \rangle \rangle -\frac{2}{n}|\langle \partial
_{b}f,\partial _{b}\phi \rangle |^{2}+\sqrt{{\small -1}}f_{0}\langle
\partial _{b}\overline{f},\partial _{b}\phi \rangle%
\end{array}%
\end{equation*}%
and from the relation (\ref{4a}) of $P_{0}^{\phi }$ and $P_{0}$, we get%
\begin{equation*}
\begin{array}{ll}
& \sum_{\alpha ,\beta }|\overline{f}_{\alpha \overline{\beta }}-\frac{1}{n}%
\overline{f}_{\gamma }{}^{\gamma }h_{\alpha \overline{\beta }}|^{2}\text{ }=%
\text{ }(B^{\alpha \overline{\beta }}\overline{f})(B_{\alpha \overline{\beta 
}}f) \\ 
= & e^{\phi }[(e^{-\phi }\overline{f}^{\alpha }B_{\alpha \overline{\beta }%
}f)^{,\overline{\beta }}-\frac{n-1}{n}(e^{-\phi }\overline{f}P_{\alpha
}f)^{,\alpha }]+\frac{n-1}{2n}\overline{f}(P_{0}^{\phi }f) \\ 
& -\frac{n-1}{4n}\overline{f}[\overline{\square }_{b}^{\phi }\left\langle
\partial _{b}f,\partial _{b}\phi \right\rangle +\square _{b}^{\phi
}\left\langle \overline{\partial }_{b}f,\overline{\partial }_{b}\phi
\right\rangle -2n^{2}f_{0}\phi _{0}] \\ 
& +\frac{1}{2}\sqrt{{\small -1}}[f_{0}\langle \partial _{b}\overline{f}%
,\partial _{b}\phi \rangle -(n-1)\overline{f}(\langle \overline{\partial }%
_{b}f_{0},\overline{\partial }_{b}\phi \rangle -\langle \partial
_{b}f_{0},\partial _{b}\phi \rangle )] \\ 
& +\frac{1}{2}[\langle \overline{\partial }_{b}|\partial _{b}f|^{2},%
\overline{\partial }_{b}\phi \rangle +\langle \partial _{b}|\overline{%
\partial }_{b}f|^{2},\partial _{b}\phi \rangle -\overline{f}_{\overline{%
\alpha }\overline{\beta }}f^{\overline{\alpha }}\phi ^{\overline{\beta }}-%
\overline{f}_{\alpha \beta }f^{\alpha }\phi ^{\beta }] \\ 
& +\frac{1}{4n}\langle \overline{\square }_{b}^{\phi }f,\langle \overline{%
\partial }_{b}f,\overline{\partial }_{b}\phi \rangle +\langle \partial
_{b}f,\partial _{b}\phi \rangle \rangle -\frac{1}{n}|\langle \partial
_{b}f,\partial _{b}\phi \rangle |^{2}.%
\end{array}%
\end{equation*}%
By integrating both sides of the above equation, from (\ref{10b}) and apply
the equations 
\begin{equation*}
\begin{array}{c}
\int_{M}[\overline{f}\langle \overline{\partial }_{b}f_{0},\overline{%
\partial }_{b}\phi \rangle -\frac{1}{2}\overline{f}f_{0}\overline{\square }%
_{b}^{\phi }\phi +f_{0}\langle \overline{\partial }_{b}\overline{f},%
\overline{\partial }_{b}\phi \rangle ]d\sigma =0%
\end{array}%
\end{equation*}%
and%
\begin{equation*}
\begin{array}{ll}
& \sqrt{{\small -1}}\int_{M}f_{0}[\langle \overline{\partial }_{b}\phi ,%
\overline{\partial }_{b}f\rangle -\langle \partial _{b}\phi ,\partial
_{b}f\rangle ]d\sigma \\ 
= & \int_{M}[\overline{f}_{\overline{\alpha }\overline{\beta }}f^{\overline{%
\alpha }}\phi ^{\overline{\beta }}+\overline{f}_{\alpha \beta }f^{\alpha
}\phi ^{\beta }-\phi _{\overline{\alpha }\overline{\beta }}\overline{f}^{%
\overline{\alpha }}f^{\overline{\beta }}-\phi _{\alpha \beta }\overline{f}%
^{\alpha }f^{\beta }]d\sigma \\ 
& +\frac{1}{2}\int_{M}[\langle \langle \overline{\partial }_{b}f,\overline{%
\partial }_{b}\phi \rangle ,\overline{\square }_{b}^{\phi }f\rangle +\langle
\langle \partial _{b}f,\partial _{b}\phi \rangle ,\square _{b}^{\phi
}f\rangle ]d\sigma \\ 
& -\int_{M}|\overline{\partial }_{b}f|^{2}(\Delta _{b}\phi +2|\overline{%
\partial }_{b}\phi |^{2})d\sigma ,%
\end{array}%
\end{equation*}%
then take its real part and note that $(B^{\alpha \overline{\beta }}%
\overline{f})(B_{\alpha \overline{\beta }}f)$ is invariant when replace $f$
by $\overline{f}$, 
\begin{equation*}
\begin{array}{ll}
& \int_{M}\sum_{\alpha ,\beta }|\overline{f}_{\alpha \overline{\beta }}-%
\frac{1}{n}\overline{f}_{\gamma }{}^{\gamma }h_{\alpha \overline{\beta }%
}|^{2}d\sigma \\ 
= & \frac{n-1}{2n}\int_{M}f(P_{0}^{\phi }\overline{f})d\sigma +\frac{2n-1}{4}%
\int_{M}|\overline{\partial }_{b}f|^{2}[\Delta _{b}\phi +2|\overline{%
\partial }_{b}\phi |^{2}]d\sigma \\ 
& +\frac{1}{2}\int_{M}[(2n-3)\func{Re}\phi _{\overline{\alpha }\overline{%
\beta }}f^{\overline{\alpha }}\overline{f}^{\overline{\beta }}-(2n-1)\func{Re%
}f_{\overline{\alpha }\overline{\beta }}\overline{f}^{\overline{\alpha }%
}\phi ^{\overline{\beta }}-\frac{2}{n}|\langle \partial _{b}f,\partial
_{b}\phi \rangle |^{2}]d\sigma \\ 
& -\frac{1}{4n}\int_{M}\func{Re}\langle \square _{b}^{\phi }f,(2n-3)\langle 
\overline{\partial }_{b}f,\overline{\partial }_{b}\phi \rangle
+(2n^{2}-3n-1)\langle \partial _{b}f,\partial _{b}\phi \rangle \rangle
d\sigma .%
\end{array}%
\end{equation*}%
Therefore, from (\ref{12}), we final get%
\begin{equation}
\begin{array}{ll}
& \int_{M}f_{\overline{\alpha }\beta }\overline{f}^{\overline{\alpha }\beta
}d\sigma \\ 
= & \frac{1}{4n}\int_{M}\langle \square _{b}^{\phi }f,\square _{b}^{\phi
}f\rangle d\sigma +\frac{2n-1}{4}\int_{M}|\overline{\partial }%
_{b}f|^{2}[\Delta _{b}\phi +2|\overline{\partial }_{b}\phi |^{2}]d\sigma \\ 
& +\frac{n-1}{2n}\int_{M}f(P_{0}^{\phi }\overline{f})d\sigma +\int_{M}[\frac{%
2n-3}{2}\func{Re}\phi _{\overline{\alpha }\overline{\beta }}f^{\overline{%
\alpha }}\overline{f}^{\overline{\beta }}-\frac{2n-1}{2}\limfunc{Re}f_{%
\overline{\alpha }\overline{\beta }}\overline{f}^{\overline{\alpha }}\phi ^{%
\overline{\beta }}]d\sigma \\ 
& -\frac{1}{4n}\int_{M}\func{Re}\langle \square _{b}^{\phi }f,(2n+1)\langle 
\overline{\partial }_{b}f,\overline{\partial }_{b}\phi \rangle
+(2n^{2}-3n-1)\langle \partial _{b}f,\partial _{b}\phi \rangle \rangle
d\sigma .%
\end{array}
\label{14}
\end{equation}%
Substituting the equation (\ref{14}) into the right hand side of (\ref{11}),
we can get (\ref{0}). This completes the proof of the Theorem \ref%
{Reilly'sformula}.
\end{proof}

\section{First Eigenvalue Estimate and Weighted Obata Theorem}

In this section, by applying the weighted CR Reilly formula (\ref{0}), we
give the first eigenvalue estimate for the weighted Kohn Laplacian $\square
_{b}^{\phi }$ and derive the Obata-type theorem in a closed weighted
strictly pseudoconvex CR $(2n+1)$-manifold.

\textbf{The Proof of Theorem}\textup{\textbf{\ \ref{TB}:}}

\begin{proof}
Let $f$ be an eigenfunction of the weighted Kohn Laplacian $\square
_{b}^{\phi }$ with respect to the first nonzero eigenvalue $\lambda _{1}$,
i.e., $\square _{b}^{\phi }f=\lambda _{1}f$. Under the curvature condition 
\begin{equation*}
\begin{array}{c}
\lbrack Ric+\frac{1}{4}Tor(\square _{b}^{\phi })]((\nabla _{b}f)_{\mathbb{C}%
},(\nabla _{b}f)_{\mathbb{C}})\geq k|\overline{\partial }_{b}f|^{2}%
\end{array}%
\end{equation*}
for a positive constant $k,$ then the integral formula (\ref{0}) yields%
\begin{equation*}
\begin{array}{ll}
& \frac{n+1}{4n}\lambda _{1}^{2}\int_{M}|f|^{2}d\sigma  \\ 
\geq  & k\int_{M}|\overline{\partial }_{b}f|^{2}d\sigma +\frac{1}{2n}\mu
_{1}\int_{M}|f|^{2}d\sigma -l\int_{M}|\overline{\partial }_{b}f|^{2}d\sigma +%
\frac{1}{8}\lambda _{1}\int_{M}\langle \overline{\partial }_{b}|f|^{2},%
\overline{\partial }_{b}\phi \rangle d\sigma  \\ 
= & \frac{1}{2n}[n(k-l)\lambda _{1}+\mu _{1}]\int_{M}|f|^{2}d\sigma +\frac{1%
}{16}\lambda _{1}\int_{M}\phi (\square _{b}^{\phi }|f|^{2})d\sigma  \\ 
= & \frac{1}{2n}[n(k-l)\lambda _{1}+\mu _{1}]\int_{M}|f|^{2}d\sigma +\frac{1%
}{8}\lambda _{1}\int_{M}\phi \lbrack \lambda _{1}|f|^{2}-2|\overline{%
\partial }_{b}f|^{2}]d\sigma  \\ 
\geq  & \frac{1}{2n}[n(k-l)\lambda _{1}+\mu _{1}]\int_{M}|f|^{2}d\sigma +%
\frac{1}{8}\lambda _{1}\int_{M}[\lambda _{1}|f|^{2}(\underset{M}{\inf }\phi
)-2|\overline{\partial }_{b}f|^{2}(\underset{M}{\sup }\phi )]d\sigma  \\ 
= & \frac{1}{4n}[2n(k-l)\lambda _{1}+2\mu _{1}-\frac{n}{2}\omega \lambda
_{1}^{2}]\int_{M}|f|^{2}d\sigma ,%
\end{array}%
\end{equation*}%
here we assume $-12\Delta _{b}\phi +|\overline{\partial }_{b}\phi |^{2}\leq
16l$ for some nonnegative constant $l$ and 
\begin{equation*}
\begin{array}{c}
\int_{M}\overline{f}(P_{0}^{\phi }f)d\sigma \geq \mu
_{1}\int_{M}|f|^{2}d\sigma 
\end{array}%
\end{equation*}%
for the first eigenvalue $\mu _{1}$ of $P_{0}^{\phi },$ and let $\omega =%
\underset{M}{\mathrm{osc}}\phi =\underset{M}{\sup }\phi -\underset{M}{\inf }%
\phi .$ It implies\ that 
\begin{equation*}
\begin{array}{c}
(\frac{n}{2}\omega +n+1)\lambda _{1}^{2}-2n(k-l)\lambda _{1}-2\mu _{1}\geq 0,%
\end{array}%
\end{equation*}%
and thus%
\begin{equation*}
\begin{array}{c}
\lambda _{1}\geq \frac{2n(k-l)+2\sqrt{n^{2}\left( k-l\right) ^{2}+(n\omega
+2n+2)\mu _{1}}}{n\omega +2n+2}.%
\end{array}%
\end{equation*}%
If the weighted CR Paneitz operator $P_{0}^{\phi }$ is nonnegative, that is $%
\mu _{1}\geq 0$, then the first eigenvalue $\lambda _{1}$ will satisfies%
\begin{equation*}
\begin{array}{c}
\lambda _{1}\geq \frac{4n(k-l)}{n\omega +2n+2}.%
\end{array}%
\end{equation*}

Moreover, if the above inequality becomes equality, then $\phi =-\underset{M}%
{\mathrm{osc}}\phi $ and thus the weighted function $\phi $ will be
constant, we have $\square _{b}^{\phi }=\square _{b}$ and $P_{0}^{\phi
}=P_{0}$. In this case we can let $l=0,$ then the first eigenvalue of the
Kohn Laplacian $\square _{b}$ achieves the sharp lower bound 
\begin{equation*}
\begin{array}{c}
\lambda _{1}=\frac{2nk}{n+1},%
\end{array}%
\end{equation*}%
and it reduces to the original Obata-type theorem for the Kohn Laplacian $%
\square _{b}$ in a closed strictly pseudoconvex CR $(2n+1)$-manifold $%
(M,J,\theta ).$ It following from Li-Son-Wang \cite{lsw} that $M$\ is CR
isometric to a standard CR $(2n+1)$-sphere.
\end{proof}


\begin{thebibliography}{CCKL}
\bibitem[CC]{cc} S.-C. Chang and H.-L. Chiu, \textit{Nonnegativity of CR
Paneitz operator and its application to the CR Obata's Theorem in a strictly
pseudoconvex CR }$(2n+1)$\textit{-manifold}, Journal of Geometric Analysis
19 (2009), 261-287.

\bibitem[CCY]{ccy} S. Chanillo, H.-L. Chiu and P. Yang, \textit{%
Embeddability for three-dimensional Cauchy-Riemann manifolds and CR Yamabe
invariant}, Duke Math. J. 161 (2012), 2909--2921.

\bibitem[CCKL]{cckl} D.-C. Chang, S.-C. Chang, T.-J. Kuo and S.-H. Lai,%
\textit{\ CR Li-Yau gradient estimate for Witten Laplacian via Bakry-Emery
pseudohermitian Ricci curvature}, Asian J. Math. 22(2) (2018), 223-256.

\bibitem[CCW1]{ccw1} S.-C. Chang, C.-W. Chan and C.-T. Wu, \textit{On} the%
\textit{\ CR analogue of Reilly formula and Yau eigenvalue conjecture},
Asian J. Math. 22(5) (2018), 919-940.

\bibitem[CCW2]{ccw2} S.-C. Chang, D. Chen and C.-T. Wu, \textit{On the Obata
Theorem in a weighted Sasakian manifold, }arXiv:1907.12800.

\bibitem[GL]{gl} C. R. Graham and J. M. Lee, \textit{Smooth solutions of
degenerate Laplacians on strictly pseudoconvex domains}, Duke Math. J. 57
(1988), 697-720.

\bibitem[H]{H} K. Hirachi, \textit{Scalar pseudo-hermitian invariants and
the Szeg\"{o} kernel on }$3$\textit{-dimensional CR manifolds}, Lecture
Notes in Pure and Appl. Math. 143, pp. 67-76, Dekker, 1992.

\bibitem[L]{l} J. M. Lee, \textit{Pseudo-Einstein structure on CR manifolds}%
, Amer. J. Math. 110 (1988), 157-178.

\bibitem[LSW]{lsw} S.-Y. Li, D. N. Son and X. Wang, \textit{A new
characterization of the CR sphere and the sharp eigenvalue estimate for the
Kohn Laplacian, }Adv. in Math. 281(1) (2015), 1285--1305.

\bibitem[Re]{Re} R. Reilly,\textit{\ Applications of the hessian operator in
a Riemannian manifold}, Indiana U. Math. J. 26 (1977), 459-472.

\bibitem[Wu]{wu} C.-T. Wu, \textit{The sharp lower bound for the first
positive eigenvalue of the Folland-Stein operator on a closed
pseudohermitian (2n+1)-manifold}, Pacific J. Math 263 (2013), 241-252.
\end{thebibliography}
\end{document}